\newcommand{\N}{\ensuremath{\mathbb{N}}}
\newcommand{\Z}{\ensuremath{\mathbb{Z}}}
\newcommand{\R}{\ensuremath{\mathbb{R}}}
\newcommand{\C}{\ensuremath{\mathbb{C}}}
\renewcommand{\d}{\mathrm{d}}
\newcommand{\ol}{\overline}
\newcommand{\mur}{\mu_{\mathrm{r}}}
\newcommand{\loc}{\mathrm{loc}}
\renewcommand{\Re}{\mathrm{Re}\,}
\renewcommand{\Im}{\mathrm{Im}\,}
\newcommand{\epsr}{\varepsilon_\mathrm{r}}
\newcommand{\E}{{\mathbf{E}}}
\renewcommand{\div}{\mathrm{div}}
\newcommand{\curl}{\mathrm{curl}\,}
\newcommand{\curll}{\mathrm{curl}}
\newcommand{\x}{\mathbf{x}}
\newcommand{\z}{\mathbf{z}}
\renewcommand{\u}{\mathbf{u}}
\renewcommand{\v}{\mathbf{v}}
\newtheorem{defi}{Definition}
\newtheorem{lemma}[defi]{Lemma}
\newtheorem{theorem}[defi]{Theorem}
\newtheorem{assumption}[defi]{Assumption}
\begin{document}

\sloppy

\title{Imaging of bi-anisotropic periodic structures from electromagnetic near field data}
\author{Dinh-Liem Nguyen\thanks{Department of Mathematics, Kansas State University, Manhattan, KS 66506; (\texttt{dlnguyen@ksu.edu})} \and Trung Truong\thanks{Department of Mathematics, Kansas State University, Manhattan, KS 66506; (\texttt{trungt@ksu.edu})}
}
\date{}
\maketitle

\begin{center}
\vspace{-0.6cm}
\textit{Dedicated to Professor  Michael Klibanov on the occasion of his 70th birthday}
\end{center}

\begin{abstract}
This paper is concerned with the inverse  scattering problem for the three-dimensional Maxwell's equations in  bi-anisotropic periodic structures. The inverse scattering problem aims to determine the shape of bi-anisotropic 
periodic scatterers from electromagnetic near field data at a fixed frequency. The Factorization method is studied as an analytical and numerical tool for solving the inverse problem. We provide a rigorous justification of the Factorization method which results in the unique determination  and a fast imaging algorithm for  the periodic scatterer. Numerical examples for imaging three-dimensional periodic structures  are presented to examine the efficiency of the method.

\end{abstract}

{\bf Keywords.}
Factorization method, Maxwell's equations, bi-anisotropic periodic structures, inverse electromagnetic scattering, sampling methods

\bigskip

{\bf AMS subject classification. }
35R30, 35R09, 65R20

\section{Introduction}
We consider an inverse  scattering problem which aims to determine the shape of 
 bi-anisotropic periodic structures from electromagnetic near field data.  The Factorization  method  is studied to solve
 the inverse scattering problem. This study is part   of the research on 
 inverse scattering from periodic structures. The periodic structures of interests here  are mainly motivated by the one-dimensional and two-dimensional photonic crystals~\cite{Dorfl2012}.  This research topic  has  received a great attention  during the past years thanks to its potential applications in nondestructive testing and optimal design in optics~\cite{Bao2001}.
 
  There has been a large body of literature covering theoretical studies on uniqueness and stability along with numerical reconstruction methods. Since we are  interested in  numerical reconstructions, we will mainly discuss  related results in this direction. 
 We first refer to~\cite{Arens2003a, Arens2005, Lechl2010, Elsch2012,Yang2012, Nguye2014, Zhang2014, Hadda2017, Lechl2018, Nguye2020, Harri2020} and references therein for an unexhausted list of results in the case of  scalar equations of Helmholtz type.  However, to our knowledge, there has been only a limited number of studies on numerical reconstructions for  the case of full Maxwell's equations, see~\cite{Sandf2010, Lechl2013b, Bao2014, Nguye2016, Jiang2017}. The numerical methods studied in these papers are the Factorization method~\cite{Sandf2010, Lechl2013b, Nguye2016} and the near field imaging method  that relies  on a transformed field expansion~\cite{Bao2014, Jiang2017}. The main advantages of the near field imaging method  are that it requires scattering data associated with only one incident plane wave and can provide  super-resolved resolution. However, the analysis of this method assumes that the periodic scattering  layer is described by  a smooth periodic function multiplied by a small surface deformation parameter.  Although the Factorization method requires scattering data generated by multiple incident plane waves, its analysis can allow us to recover periodic scattering structures of arbitrary shape. This method belongs to the class of sampling or qualitative methods that were introduced by D. Colton and A. Kirsch~\cite{Colto1996, Kirsc1998}. The Factorization method aims to construct a necessary and sufficient characterization of the unknown scatterer from multi-static data. This characterization can serve as a fast and simple imaging algorithm. We refer to~\cite{Kirsc2008} for more details about the Factorization method.  
 
 The Factorization method is studied in this paper as  an analytical and numerical tool for solving the inverse  scattering problem
 for the full Maxwell's equations in bi-anisotropic periodic structures. This study is most related to~\cite{Nguye2016} which considers the Maxwell's equations in chiral periodic structures. The chiral and bi-anisotropic media belong to the class of complex electromagnetic media which has recently received a considerable attention thanks to their applications in photonics and nano-optics, see~\cite{Roach2012, Macka2010} and references therein.
 The Maxwell's equations describing the propagation of  electromagnetic waves  through chiral or bi-anisotropic media are coupled with the constitutive relations (see~\eqref{cr1}) that are more complicated than those of standard media (e.g. non-magnetic media). Note that except~\cite{Nguye2016}  all the previously cited works for the Maxwell's equations  consider  non-magnetic media. Unlike the scalar coefficients in the chiral media case~\cite{Nguye2016}, the  coefficients in the bi-anisotropic case 
 considered in the present work are all matrix-valued functions. Therefore, the analysis of the Factorization method for the corresponding inverse problem of interest is technically more complicated. Under some assumption (Assumption~\ref{assum}) corresponding to the absorbing material case and the smallness of one of the coefficients, we can prove the coercivity (Lemma~\ref{coercive}) of the middle operator in the Factorization method. This is also the key   ingredient in the justification of the Factorization method.  
 The justification  remains open if the Assumption~\ref{assum} does not hold true (e.g. the coefficients are all real-valued). 
 
 The paper is organized as follows. In Section 2 we formulate the direct scattering problem and its equivalent integro-differential equation. The inverse problem of interest is formulated in Section 3. Section 4 provides a characterization of the scattering domain via the range of some operator.  Section 5 is dedicated to the analysis of the Factorization method in which we prove the main theorems of the paper. Finally we present in Section 6 some numerical examples for imaging three-dimensional periodic structures using the Factorization method.

\section{The direct problem}
We consider a three-dimensional periodic structure which is infinitely $2\pi$-periodic in $x_1$, $x_2$ and bounded in $x_3$, for example see Figure~\ref{fig1}(c) (here $x_1$, $x_2$, $x_3$ are the components of a vector $\mathbf{x}=(x_1,x_2,x_3)^\top$ in $\mathbb{R}^3$).  Assume that the medium inside the  periodic structure is inhomogeneous and bi-anisotropic  and the outside medium is homogeneous. 
We denote that the electric field $\mathbf{E}$, the magnetic field $\mathbf{H}$, the electric flux density $\mathbf{D}$ and the magnetic flux density $\mathbf{B}$ are three-dimensional vector-valued functions. The scattering of time-harmonic electromagnetic waves (with positive frequency $\omega$) from the bi-anisotropic periodic structure 
is described by the Maxwell's equations 
\begin{equation}\label{me1}
\curl \mathbf{E} +i\omega \mathbf{B} = 0,\quad \curl \mathbf{H} - i\omega \mathbf{D} = 0, \quad \text{in } \R^3,
\end{equation}
 along with the constitutive relations 
 \begin{equation}\label{cr1}
\mathbf{B} = \mu \mathbf{H}+ \xi\sqrt{\varepsilon_0\mu_0}\ \mathbf{E}, \quad \mathbf{D} = \varepsilon \mathbf{E} + \overline{\xi}\sqrt{\varepsilon_0\mu_0}\ \mathbf{H}.
\end{equation}
Here  $\varepsilon$ and $\mu$ are respectively the  permittivity and permeability of the scattering medium, and
 the parameter $\xi$ is typically described as $\xi =  \chi + i\kappa$ 
where $\chi$ is the chirality parameter and $\kappa$ is the non-reciprocity 
parameter of the medium (see \cite{Macka2010}).  These are  $3\times 3$ matrix-valued bounded functions satisfying  $\varepsilon = \varepsilon_0I_3, \mu = \mu_0I_3, \xi = 0I_3$ in the  outside  medium for some positive constants $\varepsilon_0$ and $\mu_0$ ($I_3$ is the $3\times 3$ identity matrix). 
We introduce the relative quantities
$$
\epsr = \frac{\varepsilon}{\varepsilon_0},\quad \mur = \frac{\mu}{\mu_0}
$$
and the scaled quantities (with the same notations as the original ones)
$$
\mathbf{E} = \sqrt{\varepsilon_0}\ \mathbf{E}, \quad \mathbf{H} = \sqrt{\mu_0}\ \mathbf{H}.
$$
Using these new quantities and plugging (\ref{cr1}) into (\ref{me1}) we obtain
\begin{equation}\label{me2}
\curl \mathbf{E} - ik(\mur\mathbf{H} + \xi\mathbf{E}) = 0,\quad 
\curl \mathbf{H} + ik(\epsr\mathbf{E} + \overline{\xi}\mathbf{H}) = 0
\end{equation}
where $k=\omega\sqrt{\varepsilon_0\mu_0}$ is the wave number. 
Assuming that $\mur$ is invertible almost everywhere in $\mathbb{R}^3$, we can write  the first equation in (\ref{me2}) as 
\begin{equation}\label{EH}
\mathbf{H} = -\frac{i}{k}\mur^{-1}\curl \mathbf{E} - \mur^{-1}\xi\mathbf{E}.
\end{equation}
Plugging this into the second equation of~\eqref{me2}  and rearranging the resulting equation we obtain
\begin{equation}\label{me3}
\curl (\mur^{-1}\curl \mathbf{E}) + ik\left[ \overline{\xi}\mur^{-1}\curl \mathbf{E} - \curl (\mur^{-1}\xi\mathbf{E}) \right] -k^2(\epsr - \overline{\xi}\mur^{-1}\xi)\mathbf{E} = 0.
\end{equation}
Now assume that the periodic structure is illuminated by some incident field $\mathbf{E}^{in}$ satisfying 
$$
\curl \curl \mathbf{E}^{in}-k^2 \mathbf{E}^{in} = 0,
$$
%
then there arises the scattered field $\u$ defined by $\u = \E - \E^{in}$. We can
thus rewrite (\ref{me3}) for the scattered field $\u$ as
\begin{multline}
\label{u_sc}
\curl^2 \mathbf{u}-k^2\mathbf{u} = k^2\left[ (P - \overline{\xi}\mur^{-1}\xi)(\mathbf{E}^{in}+\mathbf{u}) + \frac{i}{k}\overline{\xi}\mur^{-1}(\curl \mathbf{E}^{in}+\curl \mathbf{u})\right]\\
+\curl\left[ Q(\curl \mathbf{E}^{in} + \curl \mathbf{u}) -ik\mur^{-1}\xi(\mathbf{E}^{in}+\mathbf{u}) \right]
\end{multline}
where $P$ and $Q$ are the contrasts defined by 
$$P=\epsr - I_3,\quad Q= I_3 - \mur^{-1}.$$ 
Note that $P$ and $Q$ are supported inside the periodic structure. 
We now define that for $\alpha = (\alpha_1,\alpha_2,0)^\top \in \mathbb{R}^3$, a function $\mathbf{v}: \mathbb{R}^3 \to \mathbb{R}^3$ is called \emph{$\alpha$-quasiperiodic} if for any $n = (n_1,n_2,0)^\top\in \mathbb{Z}^3$
$$
\mathbf{v}(x_1+n_12\pi,x_2+n_22\pi,x_3) = e^{2\pi i\alpha\cdot n}\mathbf{v}(x_1,x_2,x_3),\quad \mathbf{x}\in\mathbb{R}^3.
$$
Following the typical approach for periodic scattering problems we consider incident fields which are  $\alpha$-quasiperiodic plane waves (see~\eqref{u_in} for the incident plane waves used to generate the data for the inverse problem.)

For  $\alpha =(\alpha_1,\alpha_2,0)^\top$ we denote
\begin{equation}
\label{alpha}
\alpha_m = (\alpha_1+m_1,\alpha_2+m_2,0)^\top,\quad m = (m_1,m_2)^\top\in\mathbb{Z}^2. 
\end{equation}
Then it is well known that the scattered field $\u$ is also $\alpha$-quasiperiodic, see for instance~\cite{Schmi2003}. Thus the scattering problem can be reduced to one period $\Omega$ of the periodic structure, which is defined by
$$ \Omega = (-\pi,\pi)^2 \times \mathbb{R}.$$
Let $D \subset \Omega$ be an open  set defined by 
$$
\ol{D} = [\mathrm{supp}(Q) \cup \mathrm{supp}(P)] \cap \Omega.
$$  
%
We now complete the direct scattering problem by the well-known Rayleigh expansion radiation condition for the scattered field. We first need some notations. 
Let $h$ be a positive constant such that 
\begin{equation}\label{h}
h>\sup\{|x_3|: \mathbf{x} \in \ol{D}\},
\end{equation}
and for $m\in\mathbb{Z}^2$ we define
\begin{equation}\label{beta}
\beta_m = \left\{ \begin{array}{rl} \sqrt{k^2-|\alpha_m|^2},& |\alpha_m| \leq k\\
i\sqrt{|\alpha_m|^2-k^2},&   |\alpha_m| > k \end{array}\right..
\end{equation}
Then $\mathbf{u}$ is called radiating if it can be expressed as the following Rayleigh expansion
\begin{equation}
\label{rc}
\mathbf{u}(\mathbf{x}) = \sum_{m\in\mathbb{Z}^2}\widehat{\mathbf{u}}^{\pm}_me^{i(\alpha_m\cdot \mathbf{x} \pm \beta_m(x_3\mp h))}\quad \text{for}\ x_3 \gtrless \pm h,
\end{equation}
where $\widehat{\mathbf{u}}^{\pm}_m$ are the Rayleigh coefficients of $\u$ defined by
$$
\widehat{\mathbf{u}}^{\pm}_m = \frac{1}{4\pi^2}\int_{-\pi}^{\pi}\int_{-\pi}^{\pi}e^{-i\alpha_m\cdot \mathbf{x}}\, \mathbf{u}(x_1,x_2,\pm h)\d x_1\d x_2,\quad m\in\mathbb{Z}^2.
$$
Note that all but finitely many terms in (\ref{rc}) are exponentially decaying, which helps us easily deduce pointwise absolute convergence of the series. Moreover, we need $\beta_m$ to be nonzero for all $m\in\mathbb{Z}^2$ or $k$ is not a Wood's anomaly.  The technical reason behind this assumption is that the representation of the $\alpha$-quasiperiodic  Green's function we use in~\eqref{green}  is not well-defined at a Wood's anomaly.

The direct problem inlcuding the equation~\eqref{u_sc} and the radiation condition \eqref{rc} can be reformulated as an integro-differential equation. This formulation  will be useful for the analysis of the inverse problem. First we define 
$$
H_{\alpha,\loc}(\curll,\Omega) = \{\mathbf{u}\in H_{\loc}(\curll,\Omega): \mathbf{u} = \widetilde{\mathbf{u}}|_{\Omega}\ \text{for some}\ \alpha \text{-quasiperiodic}\ \widetilde{\mathbf{u}}\in H_{\loc}(\curll,\mathbb{R}^3)	\}.
$$
We consider the equation (\ref{u_sc}) in its more general form
\begin{multline}\label{E1g}
\curl^2 \mathbf{u}-k^2\mathbf{u} = k^2\left[ (P - \xi\mur^{-1}\xi)(\mathbf{g}+\mathbf{u}) + \frac{i}{k}\xi\mur^{-1}(\mathbf{f}+\curl \mathbf{u})\right]\\
+\curl\left[ Q(\mathbf{f} + \curl \mathbf{u}) -ik\mur^{-1}\xi(\mathbf{g}+\mathbf{u}) \right]
\end{multline}
where $\mathbf{f}$ and $\mathbf{g}$ are some generic functions in $L^2(D,\mathbb{C}^3)$. It is clear that (\ref{u_sc}) is a particular case of (\ref{E1g}) when $\mathbf{f} = \curl\mathbf{E}^{in}$ and $\mathbf{g} = \mathbf{E}^{in}$.
Let
\begin{align*}
\mathcal{S}(\mathbf{u},\mathbf{f},\mathbf{g}) &= (P - \xi\mur^{-1}\xi)(\mathbf{g}+\mathbf{u}) + \frac{i}{k}\xi\mur^{-1}(\mathbf{f}+\curl \mathbf{u}), \\
 \mathcal{T}(\mathbf{u},\mathbf{f},\mathbf{g}) &= Q(\mathbf{f} + \curl \mathbf{u}) -ik\mur^{-1}\xi(\mathbf{g}+\mathbf{u}).
\end{align*}
It is clear that $\mathcal{S}(\mathbf{u},\mathbf{f},\mathbf{g})$ and $\mathcal{T}(\mathbf{u},\mathbf{f},\mathbf{g})$ are compactly supported in $D$ and (\ref{E1g}) can be written as
\begin{equation}\label{equ}
\curl^2 \mathbf{u}-k^2\mathbf{u} = k^2\mathcal{S}(\mathbf{u},\mathbf{f},\mathbf{g}) + \curl \mathcal{T}(\mathbf{u},\mathbf{f},\mathbf{g}).
\end{equation}
Now let us consider the variational problem of finding $\mathbf{u}\in H_{\alpha,\loc}(\curll,\Omega)$ such that 
\begin{equation}\label{var}
\int_{\Omega}(\curl \mathbf{u}\cdot\curl\overline{\mathbf{v}} - k^2\mathbf{u}\cdot\overline{\mathbf{v}})\ \d\mathbf{x} = k^2\int_{D}\mathcal{S}(\mathbf{u},\mathbf{f},\mathbf{g})\cdot\overline{\mathbf{v}}\ \d\mathbf{x} + \int_{D}\mathcal{T}(\mathbf{u},\mathbf{f},\mathbf{g})\cdot\curl \overline{\mathbf{v}}\ \d\mathbf{x}
\end{equation}
for all $\mathbf{v}\in H_{\alpha}(\curll,\Omega)$ with compact support. 

It is similar to~\cite{Nguye2015} that the
variational problem~\eqref{var} can be equivalently reformulated as
\begin{equation}
\label{indif}
\mathbf{u} = \mathcal{A}\mathcal{S}(\mathbf{u},\mathbf{f},\mathbf{g}) + \mathcal{B}\mathcal{T}(\mathbf{u},\mathbf{f},\mathbf{g})\quad \text{in}\ \Omega,
\end{equation}
where $\mathcal{A}$ and  $\mathcal{B}$ are bounded linear operators from $L^2(D,\mathbb{C}^3)$ to $H_{\alpha,\loc}(\curll,\Omega)$
defined by 
$$\mathcal{A}\mathbf{h}(\x) =(k^2+\nabla \div ) \int_{D}G_k(\mathbf{x}-\mathbf{y})\mathbf{h}(\mathbf{y})\d\mathbf{y} \quad \text{and}\quad \mathcal{B}\mathbf{h}(\x) = \curl \ \int_{D}G_k(\mathbf{x}-\mathbf{y})\mathbf{h}(\mathbf{y})\d\mathbf{y},
$$ 
where  $G_k$ is the $\alpha$-quasiperiodic Green's function of the three-dimensional Helmholtz equation
\begin{equation}\label{green}
G_k(\mathbf{x}) = \frac{i}{8\pi^2}\sum_{m\in\mathbb{Z}^2}\frac{1}{\beta_m}e^{i(\alpha_m\cdot \mathbf{x} + \beta_m|x_3|)}, \quad x \in \Omega, x_3 \neq 0.
\end{equation}
The proof of the Fredholm property of the integro-differential equation~\eqref{indif} can be done similarly as in~\cite[Theorem 4]{Nguye2019} under the following  assumption.
 \begin{assumption}
 \label{th:positive}
  Assume that $D$ is a Lipschitz domain and that $\epsr, \mur, \mur^{-1}, \xi \in L^\infty(\Omega, \C^{3\times 3})$ are symmetric almost everywhere in $\R^3$, $\xi$ is real-valued. Furthermore, assume that
 there exist positive constants $\gamma_1, \gamma_2$ such that 
for any $\mathbf{a} \in \mathbb{C}^3$
\begin{align*}
\Re(\mur^{-1}\mathbf a \cdot \overline{\mathbf a}) \geq \gamma_1|\mathbf a|^2, \quad \Re((\epsr - \xi\mur^{-1}\xi)\mathbf a\cdot \overline{\mathbf{a}}) \geq \gamma_2|\mathbf a|^2, \quad \| |\mur^{-1}\xi|_F\|_{L^\infty}< \gamma_1\gamma_2,
\end{align*}
 almost everywhere in $\R^3$.
  \end{assumption}
 Here  $|\cdot|_F$ is the Frobenius matrix norm. The Fredholm property is actually valid for  any matrix norm in the last constraint in this assumption. However,  the Frobenius norm is used here for the convenience of the proof of Lemma~\ref{coercive} for the Factorization method analysis.   The uniqueness of solution is out of the scope of this paper since  the paper aims to solve the corresponding inverse scattering problem. Typically the well-posedness of periodic scattering problems holds for all but a discrete set of wave numbers $k$, see for instance~\cite{Schmi2003}.   Therefore, for the remaining part of the paper we  assume we work with the wave number $k$  such that the direct problem or the equivalent integro-differential equation~\eqref{indif} is well-posed.

\section{The inverse problem}

In this section we formulate the inverse  problem we want to solve. In addition to Assumption 1, the following assumption is important to the study of the inverse problem.
\begin{assumption}\label{assum}
We assume  that $\Omega \setminus \ol{D}$ has at most two connected components and that each connected component of $\Omega \setminus \ol{D}$ is unbounded. Moreover, there exist $C_1, C_2>0$ such that for all $\mathbf{z}\in\mathbb{C}^3$
$$
-\Im(\mur^{-1})\mathbf{z}\cdot \bar{\mathbf{z}} \geq C_1|\mathbf{z}|^2, \qquad \Im(\epsr - \xi\mur^{-1}\xi)\mathbf{z}\cdot \bar{\mathbf{z}} \geq C_2|\mathbf{z}|^2
$$
almost everywhere in $D$ and that
$$
\frac{1}{2}\left(\lVert |\mur^{-1}\xi|_F \rVert_{L^\infty}^2+1\right) \leq \min\left\{ C_1,C_2 \right\}.
$$
\end{assumption}
Here is an example of the parameters satisfying both Assumptions 1 and 2
\begin{align}
\label{coeff1}
\epsr &= \begin{bmatrix} 1+0.75i &0&0 \\ 0&1+0.9i&0\\ 0&0&1+0.8i \end{bmatrix},\quad \mur^{-1} = \begin{bmatrix} 1-0.7i &0&0 \\ 0&1-i&0\\ 0&0&1-0.9i \end{bmatrix}, \\
\xi &= \begin{bmatrix} 0.01 &0&0 \\ 0&0.02&0\\ 0&0&0.05 \end{bmatrix} \quad \text{in } D, \quad \text{and }
\epsr = \mur^{-1} =I_3,\quad \xi = 0 \quad \text{in } \Omega \setminus \ol{D}.
\label{coeff2}
\end{align}
Since the direct problem is well-posed  we can define the solution operator $G:L^2(D,\mathbb{C}^3) \times L^2(D,\mathbb{C}^3) \to \ell^2(\mathbb{Z}^2,\mathbb{C}^4)$
$$
G(\mathbf{f},\mathbf{g}) = (\widehat{u}_{m,1}^+,\widehat{u}_{m,1}^-,\widehat{u}_{m,2}^+,\widehat{u}_{m,2}^-)_{m\in\mathbb{Z}^2}
$$
where $(\widehat{u}_{m,1}^+,\widehat{u}_{m,1}^-,\widehat{u}_{m,2}^+,\widehat{u}_{m,2}^-)_{m\in\mathbb{Z}^2}$ are the Rayleigh sequences of the first two components $(\widehat{\mathbf{u}}_m^{\pm})_{m\in\mathbb{Z}^2}$ of the radiating variational solution $\mathbf{u}$ of (\ref{E1g}).

For  $\x = (x_1,x_2,x_3)^\top$, we denote  $\tilde{\x} = (x_1,x_2,-x_3)^\top$.
For the inverse problem, we consider many incident plane waves as follows
\begin{equation}
\label{u_in}
\varphi_{m}^{(l)\pm} = p_m^{(l)}e^{i(\alpha_m\cdot\mathbf{x} + \beta_mx_3)} \pm  \tilde{p}_m^{(l)}e^{i(\alpha_m\cdot\mathbf{x} - \beta_mx_3)}, \quad l = 1,2,\ m\in\mathbb{Z}^2,
\end{equation}
where the polarizations $p_m^{(1)}$, $p_m^{(2)}$ are linearly independent vectors such that $ |p_m^{(1)}| = |p_m^{(2)}| = 1$ and $\varphi_{m}^{(l)\pm}$ are divergence-free. One possible choice could be
\begin{equation}\label{por}
p_m^{(1)} = \frac{(0,\beta_m,-\alpha_{m,2})^\top}{\sqrt{|\alpha_{m,2}|^2+|\beta_m|^2}},\quad p_m^{(2)} = \frac{(-\beta_m,0,\alpha_{m,1})^\top}{\sqrt{|\alpha_{m,1}|^2+|\beta_m|^2}}.
\end{equation}
Note that these incident plane waves were proposed in~\cite{Lechl2013b} for the analysis of the Factorization method for 
the Maxwell's equations in non-magnetic periodic structures. 
We now define the Herglotz operator  $H: \ell^2(\mathbb{Z}^2,\mathbb{C}^4) \to L^2(D,\mathbb{C}^3)\times  L^2(D,\mathbb{C}^3)$ that maps a sequence of coefficients $(a_m) = (a_{m,1}^+,a_{m,1}^-,a_{m,2}^+,a_{m,2}^-)_{m\in\mathbb{Z}^2}$ into
\begin{align}
\label{H}
H(a_m) = 
\begin{bmatrix}
\sum\limits_{m\in\mathbb{Z}^2}  \left(\frac{a_{m,1}^+}{\beta_mw_m^+}\curl\varphi_{m}^{(1)+} + \frac{a_{m,2}^+}{\beta_mw_m^+}\curl\varphi_{m}^{(2)+} + \frac{a_{m,1}^-}{\beta_mw_m^-}\curl\varphi_{m}^{(1)-} + \frac{a_{m,2}^-}{\beta_mw_m^-}\curl\varphi_{m}^{(2)-} \right) \\
\sum\limits_{m\in\mathbb{Z}^2} \left( \frac{a_{m,1}^+}{\beta_mw_m^+}\varphi_{m}^{(1)+} + \frac{a_{m,2}^+}{\beta_mw_m^+}\varphi_{m}^{(2)+} + \frac{a_{m,1}^-}{\beta_mw_m^-}\varphi_{m}^{(1)-} + \frac{a_{m,2}^-}{\beta_mw_m^-}\varphi_{m}^{(2)-}
\right)
\end{bmatrix}
\end{align}
where the weights
$$
w_m^+ = \left\{\begin{array}{ll} i,& |\alpha_m| \leq k \\ e^{-i\beta_mh}, & |\alpha_m|>k \end{array}\right., \quad w_m^- = \left\{\begin{array}{ll} 1,& |\alpha_m| \leq k \\ e^{-i\beta_mh}, & |\alpha_m|>k \end{array}\right.,\qquad m\in\mathbb{Z}^2
$$
are to help simplify the  calculations in the proofs of some analytical properties of $H$, see~\cite{Nguye2016}.  Here these weights are kept 
in the definition of $H$ for the convenience of the presentation. From~\cite{Nguye2016} we know that $H$ is a compact and injective operator.

We consider a near field measurement which is  motivated by  uniqueness results in~\cite{Kirsc1994} and applications in near field optics. Let $N:\ell^2(\mathbb{Z}^2,\mathbb{C}^4) \to \ell^2(\mathbb{Z}^2,\mathbb{C}^4)$ be the near field operator which maps a sequence $(a_m) \in\ell^2(\mathbb{Z}^2,\mathbb{C}^4)$ to the Rayleigh sequences of the first two components of the radiating variational solution $\u$ of (\ref{equ}) with $(\mathbf{f},\mathbf{g}) = H(a_m)$, that means
$$
N(a_m) = (\widehat{u}_{m,1}^+,\widehat{u}_{m,1}^-,\widehat{u}_{m,2}^+,\widehat{u}_{m,2}^-)_{m\in\mathbb{Z}^2}.
$$
 Now we are ready to state the inverse problem of interest.

\vspace{3mm}
\textbf{Inverse problem.} Given the near-field operator $N$, find the shape $D$ of the scatterer in  $\Omega$.

\section{A characterization of $D$}
In this section we will show that $D$ can be characterized by the range of the adjoint operator $H^*$ of the Herglotz operator $H$ defined in~\eqref{H}. Let us introduce two auxiliary operators which are $E:L^2(D,\mathbb{C}^3)\times L^2(D,\mathbb{C}^3) \to \ell^2(\mathbb{Z}^2,\mathbb{C}^4)$ and $W:\ell^2(\mathbb{Z}^2,\mathbb{C}^4)\to \ell^2(\mathbb{Z}^2,\mathbb{C}^4)$ defined by
$$
E(\mathbf{f},\mathbf{g}) = (\widehat{u}_{m,1}^+,\widehat{u}_{m,1}^-,\widehat{u}_{m,2}^+,\widehat{u}_{m,2}^-)_{m\in\mathbb{Z}^2}
$$
where $\mathbf{u}$ is the radiating variational solution to
\begin{equation}\label{E}
\curl^2\mathbf{u} - k^2\mathbf{u} = \curl\mathbf{f}+\mathbf{g}
\end{equation}
and 
$$
W(a_m) = 8\pi^2
\left[ \begin{array}{rrrr}
w_m^{*+}\overline{p_{m,1}^{(1)}} & w_m^{*+}\overline{p_{m,2}^{(1)}} & w_m^{*+}\overline{p_{m,1}^{(1)}} & w_m^{*+}\overline{p_{m,2}^{(1)}} \\
w_m^{*+}\overline{p_{m,1}^{(2)}} & w_m^{*+}\overline{p_{m,2}^{(2)}} & w_m^{*+}\overline{p_{m,1}^{(2)}} & w_m^{*+}\overline{p_{m,2}^{(2)}} \\
w_m^{*-}\overline{p_{m,1}^{(1)}} & w_m^{*-}\overline{p_{m,2}^{(1)}} & -w_m^{*-}\overline{p_{m,1}^{(1)}} & -w_m^{*-}\overline{p_{m,2}^{(1)}} \\
w_m^{*-}\overline{p_{m,1}^{(2)}} & w_m^{*-}\overline{p_{m,2}^{(2)}} & -w_m^{*-}\overline{p_{m,1}^{(2)}} & -w_m^{*-}\overline{p_{m,2}^{(2)}}
\end{array} \right]
\left[\begin{array}{cccc}
a_{m,1}^+\\ a_{m,1}^- \\ a_{m,2}^+ \\ a_{m,2}^-
\end{array}\right]
$$
where 
$$
w_m^{*+} = \left\{\begin{array}{ll} e^{-i\beta_mh},& |\alpha_m| \leq k \\ i, & |\alpha_m|>k \end{array}\right., \quad w_m^{*-} = \left\{\begin{array}{ll}  ie^{-i\beta_mh},& |\alpha_m| \leq k \\ i, & |\alpha_m|>k \end{array}\right.,\qquad m\in\mathbb{Z}^2
$$
and $p_m^{(1)}$, $p_m^{(2)}$ are given by (\ref{por}). Note that $E$ is well-defined thanks to well-posedness of (\ref{E}) for all frequencies $k>0$ (see \cite{Schmi2003}). In addition, we will extend $\mathbf{f}$ and $\mathbf{g}$ in (\ref{E}) by zero outside of $D$ if needed.

The proof of the following lemma can be done as in~\cite{Nguye2016}. 
\begin{lemma}
\label{H*}
The adjoint operator $H^*: L^2(D,\mathbb{C}^3)\times  L^2(D,\mathbb{C}^3) \to \ell^2(\mathbb{Z}^2,\mathbb{C}^4)$ satisfies
$$
H^* = WE.
$$
\end{lemma}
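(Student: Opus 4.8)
The plan is to compute the adjoint of $H$ directly from the definition of the inner products on $L^2(D,\C^3)\times L^2(D,\C^3)$ and on $\ell^2(\Z^2,\C^4)$, and to recognize the resulting expression as a composition $WE$. First I would take arbitrary $(\f,\g)\in L^2(D,\C^3)\times L^2(D,\C^3)$ and $(a_m)\in\ell^2(\Z^2,\C^4)$ and write out $\langle H(a_m),(\f,\g)\rangle_{L^2\times L^2}$ using the explicit series for $H$ in~\eqref{H}. The key observation is that each term in that series involves $\curl\varphi_m^{(l)\pm}$ paired against $\f$ and $\varphi_m^{(l)\pm}$ paired against $\g$; integrating by parts (the boundary terms vanish since we may regard $D$ as compactly contained in a period of $\Omega$ and test against compactly supported fields as in~\eqref{var}), one converts $\int_D \curl\varphi\cdot\ov\f$ into $\int_D \varphi\cdot\curl\ov\f$ plus boundary contributions that will combine with $\int_D\varphi\cdot\ov\g$ into a pairing of $\varphi_m^{(l)\pm}$ against $\curl\f+\g$. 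Thus $\langle H(a_m),(\f,\g)\rangle$ becomes a conjugate-linear functional of $(\f,\g)$ expressed through the quantities $\int_D \ov{\varphi_m^{(l)\pm}}\cdot(\curl\f+\g)\,\d\x$ — but these are exactly (up to the $\alpha$-quasiperiodicity and the Rayleigh-coefficient formula) the Rayleigh coefficients of the solution $\u$ of~\eqref{E}, because $\varphi_m^{(l)\pm}$ are the relevant quasiperiodic exponential solutions of $\curl^2-k^2$ and one can use the representation of $\u$ via the $\alpha$-quasiperiodic Green's function~\eqref{green} together with a reciprocity/duality identity to identify $\int_D \ov{\varphi}\cdot(\curl\f+\g)$ with a Rayleigh coefficient $\widehat u^{\pm}_{m,j}$ of $E(\f,\g)$.

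The second half of the argument is bookkeeping: having written $\langle H(a_m),(\f,\g)\rangle = \langle (a_m), M\,E(\f,\g)\rangle_{\ell^2}$ for an explicit matrix multiplication operator $M$ acting entrywise in $m$, I would check that $M$ coincides with the matrix $W$ displayed before the lemma. This amounts to matching the polarization vectors $p_m^{(l)}$ and the weights. The factor $8\pi^2$ arises from the $\tfrac{1}{4\pi^2}$ in the Rayleigh-coefficient formula together with a factor $2$ from the $\pm$ structure; the weights $w_m^{\pm}$ appearing in the denominators of $H$ get conjugated and combined with the $\beta_m$'s and with the weights intrinsic to the Green's-function representation to produce precisely $w_m^{*+},w_m^{*-}$ — one must be careful that for $|\alpha_m|\le k$ the $\beta_m$ are real so conjugation only flips signs of $i$'s, while for $|\alpha_m|>k$ the $\beta_m$ are purely imaginary and $e^{-i\beta_m h}$ is real, which is exactly why the weights are defined piecewise. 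The duplicated columns and the sign pattern in $W$ reflect that $\varphi_m^{(l)+}$ and $\varphi_m^{(l)-}$ differ only in the sign of the $e^{-i\beta_m x_3}$ term, so their contributions to the upgoing ($+$) Rayleigh coefficient are equal while their contributions to the downgoing ($-$) coefficient differ by a sign.

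Since the statement says ``the proof can be done as in~\cite{Nguye2016}'', I would present this as a short verification: cite the chiral-media computation there, point out that the only change is that the scalar coefficients become matrix-valued, which does not affect this particular computation at all (the operators $H$, $E$, $W$ are defined purely in terms of the incident fields and the free Helmholtz Green's function, not the contrasts $P,Q$), and then supply the integration-by-parts/reciprocity step in a line or two. The main obstacle — and the only place requiring genuine care rather than transcription — is the identification of $\int_D \ov{\varphi_m^{(l)\pm}}\cdot(\curl\f+\g)\,\d\x$ with the Rayleigh coefficients of $E(\f,\g)$: this requires invoking the Green's representation formula for the solution of~\eqref{E}, restricting to the planes $x_3=\pm h$, and using that the quasiperiodic exponentials are orthogonal over a period, so that the overlap integral against $\varphi_m^{(l)\pm}$ isolates the single Fourier-Rayleigh mode with the correct weight. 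Everything else is routine linear algebra on $4\times 4$ blocks.
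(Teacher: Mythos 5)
Your proposal is correct and follows essentially the same route as the paper, which gives no proof beyond deferring to the analogous computation in~\cite{Nguye2016}: compute $\langle H(a_m),(\f,\g)\rangle$ termwise, identify $\int_D(\ov{\curl\varphi_m^{(l)\pm}}\cdot\f+\ov{\varphi_m^{(l)\pm}}\cdot\g)$ (i.e.\ the distributional pairing of $\varphi_m^{(l)\pm}$ with $\curl\f+\g$) with the Rayleigh coefficients of the solution of~\eqref{E} via the Green's-function representation, and match weights and polarizations with $W$. Your observation that the argument is untouched by the matrix-valued coefficients, since $H$, $E$ and $W$ do not involve $P$, $Q$ or $\xi$, is exactly the point that justifies the paper's citation.
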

Next we introduce the $\alpha$-quasiperiodic Green's tensor 
$$
\mathbb{G}_k(\mathbf{x},\mathbf{z}) = G_k(\mathbf{x}-\mathbf{z})I_3 + \frac{1}{k^2}\nabla_{\mathbf{x}}\div_{\mathbf{x}}(G_k(\mathbf{x}-\mathbf{z})I_3),\quad \mathbf{x},\mathbf{z}\in \Omega,\ x_3 \neq z_3
$$
where $G_k$ is given by (\ref{green}) and $\nabla_{\mathbf{x}}$, $\div_{\mathbf{x}}$ are  taken componentwise and columnwise respectively. Note that for a fixed $\mathbf{z} \in \Omega$, $\mathbb{G}_k(\mathbf{x},\mathbf{z})$ solves 
\begin{equation}\label{fund}
\curl^2\mathbb{G}_k(\mathbf{x},\mathbf{z}) - k^2\mathbb{G}_k(\mathbf{x},\mathbf{z}) = \delta_{\mathbf{z}}(\mathbf{x})I_3
\end{equation}
in the sense of distribution and it satisfies the Rayleigh expansion radiation condition (the $\curl$ is taken columnwise).

Let $\mathbf{p} = (p_1,p_2,p_3)^\top \in \R^3$ and $\Psi_{\mathbf{z}}(\mathbf{x}) = k^2\mathbb{G}_k(\mathbf{x},\mathbf{z})\mathbf{p}$. Denote by 
$$
(\widehat{\Psi}_{\mathbf{z},m}) = (\widehat{\Psi}_{m,1}^+(\mathbf{z}),\widehat{\Psi}_{m,1}^-(\mathbf{z}),\widehat{\Psi}_{m,2}^+(\mathbf{z}),\widehat{\Psi}_{m,2}^-(\mathbf{z}))_{m\in\mathbb{Z}^2}
$$
the Rayleigh sequences of the first two components of $\Psi_{\mathbf{z}}$. Then $(\widehat{\Psi}_{\mathbf{z},m})$ can be explicitly given by
$$
(\widehat{\Psi}_{\mathbf{z},m}) = 
\begin{bmatrix}
(k^2 - \alpha_{m,1}^2)\widehat{G}^+_{k,m}(\mathbf{z})p_1 - \alpha_{m,1}\alpha_{m,2}\widehat{G}^+_{k,m}(\mathbf{z})p_2 - \alpha_{m,1}\beta_m\widehat{G}^+_{k,m}(\mathbf{z})p_3 \\
(k^2 - \alpha_{m,1}^2)\widehat{G}^-_{k,m}(\mathbf{z})p_1 - \alpha_{m,1}\alpha_{m,2}\widehat{G}^-_{k,m}(\mathbf{z})p_2 + \alpha_{m,1}\beta_m\widehat{G}^-_{k,m}(\mathbf{z})p_3 \\
-\alpha_{m,1}\alpha_{m,2}\widehat{G}^+_{k,m}(\mathbf{z})p_1 + (k^2 - \alpha_{m,2}^2)\widehat{G}^+_{k,m}(\mathbf{z})p_2 - \alpha_{m,2}\beta_m\widehat{G}^+_{k,m}(\mathbf{z})p_3 \\
-\alpha_{m,1}\alpha_{m,2}\widehat{G}^-_{k,m}(\mathbf{z})p_1 + (k^2 - \alpha_{m,2}^2)\widehat{G}^-_{k,m}(\mathbf{z})p_2 + \alpha_{m,2}\beta_m\widehat{G}^-_{k,m}(\mathbf{z})p_3
\end{bmatrix}_{m\in\mathbb{Z}^2}
$$
where $(\widehat{G}^{\pm}_{k,m}(\mathbf{z}))_{m\in\mathbb{Z}^2}$ are the Rayleigh sequences of $G_k(\cdot - \mathbf{z})$ and $\alpha_m$, $\beta_m$ are given by (\ref{alpha}) and (\ref{beta}). 

Denote by $\mathcal{R}(A)$ the range of some operator $A$. The domain $D$ can be characterized by the $\mathcal{R}(H^*)$ 
as follows. 
\begin{theorem} \label{cha2}
A point $\mathbf{z} \in \Omega$ belongs to $D$ if and only if $W(\widehat{\Psi}_{\mathbf{z},m}) \in \mathcal{R}(H^*)$.
\end{theorem}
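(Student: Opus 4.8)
The plan is to use Lemma~\ref{H*} to trade $\mathcal{R}(H^*)$ for $\mathcal{R}(E)$, and then to settle the two resulting inclusions separately: the ``only if'' part by an explicit cut-off construction, and the ``if'' part, in contrapositive form, by a unique continuation argument that exploits the singularity of the Green's tensor. The first step is to note that $H^*=WE$ gives $\mathcal{R}(H^*)=W(\mathcal{R}(E))$, so I need $W$ to be injective. Since $W$ acts blockwise on $\ell^2(\Z^2,\C^4)$, I would compute the determinant of its $m$-th $4\times4$ block; elementary column operations reduce it to $|\det W_m|=4(8\pi^2)^4|\det P_m|^2$, where $P_m$ is the $2\times2$ matrix formed by the first two components of $p_m^{(1)}$ and $p_m^{(2)}$. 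With the choice~\eqref{por} one finds $\det P_m=\frac{\beta_m^2}{\sqrt{(|\alpha_{m,1}|^2+|\beta_m|^2)(|\alpha_{m,2}|^2+|\beta_m|^2)}}$, which is nonzero because $k$ is not a Wood anomaly and stays bounded below as $|m|\to\infty$; since $|w_m^{*\pm}|=1$, the operator $W$ is in fact boundedly invertible. Thus $W(\widehat{\Psi}_{\z,m})\in\mathcal{R}(H^*)$ if and only if $(\widehat{\Psi}_{\z,m})\in\mathcal{R}(E)$, and the theorem reduces to showing that $\z\in D$ if and only if $(\widehat{\Psi}_{\z,m})\in\mathcal{R}(E)$.

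For the direction $\z\in D\Rightarrow(\widehat{\Psi}_{\z,m})\in\mathcal{R}(E)$, I would pick $\chi\in C_c^\infty(D)$ with $\chi\equiv1$ in a neighbourhood of $\z$, extend it $2\pi$-periodically in $x_1,x_2$, and set $\bw=(1-\chi)\Psi_{\z}$ with $\Psi_{\z}=k^2\mathbb{G}_k(\cdot,\z)\mathbf{p}$. Since $\Psi_{\z}$ is analytic away from $\z+2\pi\Z^2$ and $1-\chi$ vanishes there, $\bw$ is smooth and $\alpha$-quasiperiodic, coincides with $\Psi_{\z}$ for $|x_3|\ge h$ (so it is radiating and has the same Rayleigh sequence as $\Psi_{\z}$), and $\mathbf{h}:=\curll^2\bw-k^2\bw$ is smooth and compactly supported in $D$ because it vanishes both where $1-\chi=1$ (by~\eqref{fund}) and where $\chi=1$ (where $\bw=0$). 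Hence $\bw$ is \emph{the} radiating solution of~\eqref{E} with data $(\f,\g)=(0,\mathbf{h})$, and by uniqueness $E(0,\mathbf{h})$ is the first-two-component Rayleigh sequence of $\bw$, namely $(\widehat{\Psi}_{\z,m})$.

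For the converse I would argue by contradiction: suppose $\z\in\Omega\setminus D$ but $(\widehat{\Psi}_{\z,m})=E(\f,\g)$ for some $\f,\g\in L^2(D,\C^3)$, with radiating solution $\u$ of~\eqref{E}. In the half-slabs $\{|x_3|>h\}$ both $\u$ and $\Psi_{\z}$ are divergence-free (apply the divergence to~\eqref{E}, respectively to~\eqref{fund}), so in each Rayleigh mode the third coefficient equals $\mp\beta_m^{-1}(\alpha_{m,1}\widehat{v}_{m,1}^{\pm}+\alpha_{m,2}\widehat{v}_{m,2}^{\pm})$ and is therefore determined by the first two; since $\u$ and $\Psi_{\z}$ share their first-two-component Rayleigh sequences, they have identical Rayleigh expansions and agree for $|x_3|\ge h$. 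The difference $\bw=\u-\Psi_{\z}$ then satisfies $\curll^2\bw-k^2\bw=0$ and $\div\bw=0$ in $\Omega\setminus(\ol D\cup\{\z\})$, so its components solve the Helmholtz equation and unique continuation is available. By Assumption~\ref{assum}, $\Omega\setminus\ol D$ has at most two connected components, each unbounded and therefore each containing one of the half-slabs $\{\pm x_3>h\}$; propagating $\bw=0$ from those slabs through the components yields $\bw=0$ on $\Omega\setminus(\ol D\cup\{\z\})$, i.e.\ $\u=\Psi_{\z}$ there. This is impossible, since $\u\in H_{\alpha,\loc}(\curll,\Omega)\subset L^2_{\loc}(\Omega)$, whereas the term $\nabla_{\mathbf{x}}\div_{\mathbf{x}}(G_k(\cdot-\z)I_3)\mathbf{p}$ in $\Psi_{\z}$ produces a singularity at $\z$ that is not square integrable over any cone with vertex $\z$, and such a cone lies in $\Omega\setminus\ol D$ near $\z$ because $D$ is a Lipschitz domain (Assumption~\ref{th:positive}).

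The cut-off construction in the first implication is routine. I expect the real work to be in the converse, and in three places in particular: checking that $W$ is injective so one may pass between $\mathcal{R}(H^*)$ and $\mathcal{R}(E)$; propagating $\bw=0$ correctly when $\Omega\setminus\ol D$ breaks into two components, which is exactly what Assumption~\ref{assum} is there to handle; and treating a sampling point $\z\in\partial D$, where the Lipschitz regularity of $D$ is needed to ensure that the non-$L^2$ singularity of $\Psi_{\z}$ is already seen from the exterior $\Omega\setminus\ol D$.
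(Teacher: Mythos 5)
Your proof is correct and follows exactly the route the paper has in mind: the paper itself gives no details, deferring to Lemma~7 of~\cite{Lechl2013b}, and your argument (blockwise invertibility of $W$ to reduce $\mathcal{R}(H^*)$ to $\mathcal{R}(E)$, the periodic cut-off construction for $\mathbf{z}\in D$, and unique continuation from the unbounded components of $\Omega\setminus\ol{D}$ combined with the non-square-integrable $|\mathbf{x}-\mathbf{z}|^{-3}$ singularity of $\mathbb{G}_k$ over an exterior cone for $\mathbf{z}\notin D$) is a faithful, self-contained version of that standard argument. In particular you correctly identify where Assumption~\ref{assum} (unboundedness of the components) and the Lipschitz regularity of $D$ (exterior cone at boundary points) enter.
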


\begin{proof}
The proof can be done similarly as that of Lemma 7 in~\cite{Lechl2013b}.
\end{proof}

Note that  since the definition of $H^*$ involves $D$, we can't compute $D$ using this characterization. However, this characterization will serve as an intermediate step to connect $D$ to the near field operator $N$ that is given. This is  the 
goal of the Factorization method that is studied in the next section.

\section{The Factorization method}
This section is dedicated to the analysis of the Factorization method. 
Let  $T: L^2(D,\mathbb{C}^3)\times  L^2(D,\mathbb{C}^3)  \to L^2(D,\mathbb{C}^3)\times  L^2(D,\mathbb{C}^3)$ be defined by
$$
T(\mathbf{f},\mathbf{g}) = \begin{bmatrix}Q & -ik\mur^{-1}\xi \\ ik\xi\mur^{-1} & k^2(P-\xi\mur^{-1}\xi) \end{bmatrix} \begin{bmatrix}\mathbf{f}+\curl \mathbf{u}\\ \mathbf{g}+\mathbf{u} \end{bmatrix}
$$
where $\mathbf{u}$ is the radiating variational solution to (\ref{equ}).

\begin{lemma}
$T$ is a bounded  linear operator on $ L^2(D,\mathbb{C}^3)\times  L^2(D,\mathbb{C}^3)$.
\end{lemma}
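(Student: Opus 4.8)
The plan is to exhibit $T$ as the composition of three bounded linear maps and then read off boundedness directly from the well-posedness of the direct problem that has already been assumed for the chosen wave number $k$.

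First I would record linearity. For fixed $k$ the integro-differential equation \eqref{indif} (equivalently the variational problem \eqref{var}, equivalently \eqref{equ}) is linear in the data $(\mathbf{f},\mathbf{g})$, and since we work with a $k$ for which it is well-posed, it defines a linear solution operator $L:(\mathbf{f},\mathbf{g})\mapsto\mathbf{u}$. Hence the map $(\mathbf{f},\mathbf{g})\mapsto(\mathbf{f}+\curl\mathbf{u},\,\mathbf{g}+\mathbf{u})$ is linear, and left-multiplication by the fixed coefficient matrix is linear; so $T$ is linear.

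The core step is an a priori estimate for $L$. The operators $\mathcal{A},\mathcal{B}$ are bounded from $L^2(D,\C^3)$ into $H_{\alpha,\loc}(\curll,\Omega)$, and $\mathcal{S}(\cdot,\mathbf{f},\mathbf{g})$, $\mathcal{T}(\cdot,\mathbf{f},\mathbf{g})$ are affine maps with $L^\infty$ coefficients supported in $D$; combining the Fredholm property of \eqref{indif} with the assumed well-posedness yields a constant $C=C(k,D)>0$ with
\[
\|\mathbf{u}|_D\|_{L^2(D)}+\|\curl\mathbf{u}|_D\|_{L^2(D)}\;\le\;C\bigl(\|\mathbf{f}\|_{L^2(D)}+\|\mathbf{g}\|_{L^2(D)}\bigr),
\]
where one uses that $D$ lies in a single period so the local $H(\curll)$-control restricts to a genuine norm bound on the bounded set $D$. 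From this, $(\mathbf{f},\mathbf{g})\mapsto(\mathbf{f}+\curl\mathbf{u},\,\mathbf{g}+\mathbf{u})$ maps $L^2(D,\C^3)\times L^2(D,\C^3)$ boundedly into itself, with operator norm at most $1+C$.

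Finally I would note that the matrix $\begin{bmatrix}Q & -ik\mur^{-1}\xi \\ ik\xi\mur^{-1} & k^2(P-\xi\mur^{-1}\xi)\end{bmatrix}$ has all entries in $L^\infty(D,\C^{3\times3})$: by Assumption~\ref{th:positive} we have $\mur^{-1},\xi\in L^\infty$, hence $Q=I_3-\mur^{-1}$, $P=\epsr-I_3$, and the products $\mur^{-1}\xi$, $\xi\mur^{-1}$, $\xi\mur^{-1}\xi$ all lie in $L^\infty$; pointwise multiplication by such a matrix field is bounded on $L^2(D,\C^3)\times L^2(D,\C^3)$. Thus $T$ is a composition of bounded linear operators, hence bounded, with $\|T\|\le M(1+C)$ where $M$ bounds the $L^\infty$ norm of the coefficient matrix. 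The only non-routine ingredient is the continuity estimate for $L$, and that is exactly what the assumed well-posedness of \eqref{indif} supplies; everything else is bookkeeping with $L^\infty$ multipliers and the triangle inequality, so I do not expect a genuine obstacle here.
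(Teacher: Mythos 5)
Your proposal is correct and follows essentially the same route as the paper: both derive the a priori bound $\lVert \mathbf{u}\rVert_{H(\curll,D)} \leq C\lVert(\mathbf{f},\mathbf{g})\rVert$ from the bounded invertibility of the integro-differential equation \eqref{indif} (the paper writes this as $(I-\mathcal{A}\widetilde{\mathcal{S}}-\mathcal{B}\widetilde{\mathcal{T}})^{-1}$ applied to the data terms), then pass to $(\mathbf{f}+\curl\mathbf{u},\,\mathbf{g}+\mathbf{u})$ by the triangle inequality and finish with the $L^\infty$ bound on the coefficient matrix. Your factorization of $T$ into the solution-dependent affine map followed by a matrix multiplier is just a cleaner packaging of the paper's estimates on $\mathcal{S}(\mathbf{u},\mathbf{f},\mathbf{g})$ and $\mathcal{T}(\mathbf{u},\mathbf{f},\mathbf{g})$.
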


\begin{proof}
The linearity of $T$ follows from the linearity and well-posedness of (\ref{equ}). We will show its boundedness. For $(\mathbf{f},\mathbf{g}) \in L^2(D,\mathbb{C}^3) \times L^2(D,\mathbb{C}^3)$
$$
\lVert T(\mathbf{f},\mathbf{g}) \rVert^2_{L^2(D,\mathbb{C}^3) \times L^2(D,\mathbb{C}^3)} = \lVert \mathcal{S}(\mathbf{u},\mathbf{f},\mathbf{g}) \rVert_{L^2(D,\mathbb{C}^3)}^2 + \lVert \mathcal{T}(\mathbf{u},\mathbf{f},\mathbf{g}) \rVert_{L^2(D,\mathbb{C}^3)}^2.
$$
Let  $\mathbf{u} \in H_{\alpha,\loc}(\curll,\Omega)$ be the  solution of 
$$
\mathbf{u} = \mathcal{A}\mathcal{S}(\mathbf{u},\mathbf{f},\mathbf{g}) + \mathcal{B}\mathcal{T}(\mathbf{u},\mathbf{f},\mathbf{g}),
$$
which can be rewritten as
\begin{equation}\label{prt}
\mathbf{u} - \mathcal{A}\widetilde{\mathcal{S}}\mathbf{u} - \mathcal{B}\widetilde{\mathcal{T}}\mathbf{u} = \mathcal{A}( (P-\xi\mur^{-1}\xi)\mathbf{g} + \frac{i}{k}\xi\mur^{-1}\mathbf{f}) + \mathcal{B}( Q\mathbf{f} - ik\mur^{-1}\xi\mathbf{g}),
\end{equation}
where
$$
\widetilde{\mathcal{S}}\mathbf{u} =  (P - \xi\mur^{-1}\xi)\mathbf{u} + \frac{i}{k}\xi\mur^{-1}\curl \mathbf{u}, \quad \widetilde{\mathcal{T}}\mathbf{u} = Q\curl \mathbf{u} -ik\mur^{-1}\xi\mathbf{u}.
$$
Since we assume  (\ref{prt}) is well-posed, the operator $I-\mathcal{A}\widetilde{\mathcal{S}} -\mathcal{B}\widetilde{\mathcal{T}}$ is boundedly invertible and
$$
\mathbf{u} = (I-\mathcal{A}\widetilde{\mathcal{S}}-\mathcal{B}\widetilde{\mathcal{T}})^{-1}[ \mathcal{A}( (P-\xi\mur^{-1}\xi)\mathbf{g} + \frac{i}{k}\xi\mur^{-1}\mathbf{f}) + \mathcal{B}( Q\mathbf{f} - ik\mur^{-1}\xi\mathbf{g}) ].
$$
Since $\mathcal{A}, \mathcal{B}$ are bounded and  the scatterer's parameters are 
in $L^{\infty}(\mathbb{R}^3,\C^{3\times 3})$, it follows that there exists $c >0$ such that
$$
\lVert \mathbf{u}\rVert_{H(\curll,D)} \leq c \lVert (\mathbf{f},\mathbf{g}) \rVert_{L^2(D,\mathbb{C}^3) \times L^2(D,\mathbb{C}^3)}.
$$
This implies
\begin{align*}
\lVert \mathbf{g} + \mathbf{u} \rVert_{L^2(D,\mathbb{C}^3)} & \leq (c + 1)\lVert (\mathbf{f},\mathbf{g}) \rVert_{L^2(D,\mathbb{C}^3) \times L^2(D,\mathbb{C}^3)}, \\
\lVert \mathbf{f} + \curl\mathbf{u} \rVert_{L^2(D,\mathbb{C}^3)} & \leq (c + 1)\lVert (\mathbf{f},\mathbf{g}) \rVert_{L^2(D,\mathbb{C}^3) \times L^2(D,\mathbb{C}^3)}.
\end{align*}
Therefore by the definition of $\mathcal{S}$ and $\mathcal{T}$ there exist $c_1,c_2>0$ such that
\begin{align*}
\lVert \mathcal{S}(\mathbf{u},\mathbf{f},\mathbf{g}) \rVert_{L^2(D,\mathbb{C}^3)} &\leq c_1\lVert (\mathbf{f},\mathbf{g}) \rVert_{L^2(D,\mathbb{C}^3) \times L^2(D,\mathbb{C}^3)},\\
\lVert \mathcal{T}(\mathbf{u},\mathbf{f},\mathbf{g}) \rVert_{L^2(D,\mathbb{C}^3)} &\leq c_2\lVert (\mathbf{f},\mathbf{g}) \rVert_{L^2(D,\mathbb{C}^3) \times L^2(D,\mathbb{C}^3)},
\end{align*}
and the boundedness of $T$ follows.
\end{proof}

Let $\Im T$ be  the imaginary part of $T$ defined by
$$
\Im T = \frac{1}{2i}(T - T^*).
$$
In the next lemma we prove the coercivity of $\Im T$ which is the key ingredient  in the analysis of the Factorization method.

\begin{lemma}
\label{coercive}
There exists $c>0$ such that for all $(\mathbf{f},\mathbf{g}) \in L^2(D,\mathbb{C}^3)\times L^2(D,\mathbb{C}^3) $
\begin{equation}\label{ImT}
(\Im T(\mathbf{f},\mathbf{g}),(\mathbf{f},\mathbf{g}))_{L^2(D,\mathbb{C}^3)\times L^2(D,\mathbb{C}^3)} \geq c\lVert (\mathbf{f},\mathbf{g})\rVert^2_{L^2(D,\mathbb{C}^3)\times L^2(D,\mathbb{C}^3)}.
\end{equation}
\end{lemma}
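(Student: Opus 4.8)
The plan is to let the matrix defining $T$ act on the ``total field'' pair, split the quadratic form into a volume integral over $D$ plus a radiation term, discard the radiation term because it has the right sign, bound the volume integral from below using Assumption~\ref{assum}, and finally convert that estimate from the total fields back to $(\mathbf f,\mathbf g)$ by the well-posedness bound. Let $\mathbf u$ be the radiating solution of~\eqref{equ} for the data $(\mathbf f,\mathbf g)$ and set $\phi:=\mathbf f+\curl\mathbf u$ and $\psi:=\mathbf g+\mathbf u$, both in $L^2(D,\mathbb C^3)$. Writing $\mathcal M$ for the matrix in the definition of $T$, one checks directly that $T(\mathbf f,\mathbf g)=\big(\mathcal T(\mathbf u,\mathbf f,\mathbf g),\,k^2\mathcal S(\mathbf u,\mathbf f,\mathbf g)\big)=\mathcal M\,(\phi,\psi)^\top$, so that substituting $\mathbf f=\phi-\curl\mathbf u$ and $\mathbf g=\psi-\mathbf u$ gives
\[
\big(\Im T(\mathbf f,\mathbf g),(\mathbf f,\mathbf g)\big)=\Im\!\int_D\!\mathcal M(\phi,\psi)^\top\!\cdot\overline{(\phi,\psi)^\top}\,\d\x-\Im\Big(k^2\!\int_D\!\mathcal S(\mathbf u,\mathbf f,\mathbf g)\cdot\overline{\mathbf u}\,\d\x+\int_D\!\mathcal T(\mathbf u,\mathbf f,\mathbf g)\cdot\overline{\curl\mathbf u}\,\d\x\Big).
\]

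For the second term I would integrate~\eqref{equ} against $\overline{\mathbf u}$ over the slab $\Omega_h:=(-\pi,\pi)^2\times(-h,h)$ and integrate by parts twice (using that $\mathbf u$ is smooth and divergence-free near $\Gamma_{\pm h}:=(-\pi,\pi)^2\times\{\pm h\}$); the lateral boundary terms cancel by $\alpha$-quasiperiodicity and one is left with
\[
\int_{\Omega_h}\!\big(|\curl\mathbf u|^2-k^2|\mathbf u|^2\big)\,\d\x+\int_{\Gamma_h\cup\Gamma_{-h}}\!(\nu\times\curl\mathbf u)\cdot\overline{\mathbf u}\,\d s=k^2\!\int_D\!\mathcal S(\mathbf u,\mathbf f,\mathbf g)\cdot\overline{\mathbf u}\,\d\x+\int_D\!\mathcal T(\mathbf u,\mathbf f,\mathbf g)\cdot\overline{\curl\mathbf u}\,\d\x .
\]
Taking imaginary parts removes the first integral, and inserting the Rayleigh expansion~\eqref{rc} together with the $L^2$-orthogonality of $(e^{\i\alpha_m\cdot\x})_m$ shows that the imaginary part of the boundary integral equals $-4\pi^2\sum_{|\alpha_m|\le k}\beta_m\big(|\widehat{\mathbf u}_m^+|^2+|\widehat{\mathbf u}_m^-|^2\big)\le0$, the evanescent modes contributing nothing --- this is the computation already carried out in~\cite{Nguye2016,Lechl2013b}. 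Hence the bracketed term in the previous display is $\le0$, so $\big(\Im T(\mathbf f,\mathbf g),(\mathbf f,\mathbf g)\big)\ge\Im\int_D\mathcal M(\phi,\psi)^\top\cdot\overline{(\phi,\psi)^\top}\,\d\x$.

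I would then estimate this volume term from below. Since $\mur^{-1}$ and $\epsr-\xi\mur^{-1}\xi$ are symmetric a.e.\ (Assumption~\ref{th:positive}), the two diagonal blocks of $\Im\mathcal M$ yield $\int_D\!\big(-\Im(\mur^{-1})\big)\phi\cdot\overline\phi\,\d\x+k^2\!\int_D\!\Im(\epsr-\xi\mur^{-1}\xi)\,\psi\cdot\overline\psi\,\d\x\ge C_1\lVert\phi\rVert_{L^2(D)}^2+k^2C_2\lVert\psi\rVert_{L^2(D)}^2$ by the coercivity hypotheses of Assumption~\ref{assum}. Using the symmetry of $\xi$ and $\mur^{-1}$ once more, the two off-diagonal blocks combine into a single contribution depending only on $\Im(\mur^{-1}\xi)$; estimating it by Cauchy--Schwarz in the Frobenius norm and then Young's inequality gives, for any $\delta>0$, an upper bound $\delta\lVert\phi\rVert_{L^2(D)}^2+\delta^{-1}k^2\lVert\,|\mur^{-1}\xi|_F\,\rVert_{L^\infty}^2\lVert\psi\rVert_{L^2(D)}^2$. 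The smallness condition $\tfrac12\big(\lVert\,|\mur^{-1}\xi|_F\,\rVert_{L^\infty}^2+1\big)\le\min\{C_1,C_2\}$ forces $C_1C_2\ge\tfrac14\big(\lVert\,|\mur^{-1}\xi|_F\,\rVert_{L^\infty}^2+1\big)^2\ge\lVert\,|\mur^{-1}\xi|_F\,\rVert_{L^\infty}^2$, so $\delta$ can be chosen in the nonempty interval $\big(\lVert\,|\mur^{-1}\xi|_F\,\rVert_{L^\infty}^2/C_2,\,C_1\big)$, and absorbing the cross term leaves $\Im\int_D\mathcal M(\phi,\psi)^\top\cdot\overline{(\phi,\psi)^\top}\,\d\x\ge c_0\big(\lVert\phi\rVert_{L^2(D)}^2+\lVert\psi\rVert_{L^2(D)}^2\big)$ for some $c_0>0$.

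Finally, because $\mathbf u=\mathcal A\mathcal S(\mathbf u,\mathbf f,\mathbf g)+\mathcal B\mathcal T(\mathbf u,\mathbf f,\mathbf g)$ with $\mathcal A,\mathcal B$ bounded into $H_{\alpha,\loc}(\curll,\Omega)$, and because $\mathcal S(\mathbf u,\mathbf f,\mathbf g)$ and $\mathcal T(\mathbf u,\mathbf f,\mathbf g)$ are, as elements of $L^2(D,\mathbb C^3)$, linear combinations of $\phi$ and $\psi$ with $L^\infty$ coefficients, one gets $\lVert\mathbf u\rVert_{H(\curll,D)}\le c\big(\lVert\phi\rVert_{L^2(D)}+\lVert\psi\rVert_{L^2(D)}\big)$; then $\mathbf f=\phi-\curl\mathbf u$ and $\mathbf g=\psi-\mathbf u$ give $\lVert(\mathbf f,\mathbf g)\rVert^2_{L^2(D,\mathbb C^3)\times L^2(D,\mathbb C^3)}\le c'\big(\lVert\phi\rVert_{L^2(D)}^2+\lVert\psi\rVert_{L^2(D)}^2\big)$, and chaining the three displayed estimates yields~\eqref{ImT}. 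I expect the main obstacle to be the non-self-adjoint structure of $\mathcal M$: showing that the off-diagonal contributions reduce to a quantity controlled by $\Im(\mur^{-1}\xi)$ and that the Frobenius-norm smallness in Assumption~\ref{assum} is exactly what is needed to absorb them into the diagonal coercivity. The correct sign of the radiation term is standard (carried over from~\cite{Nguye2016,Lechl2013b}), and the passage from $(\phi,\psi)$ back to $(\mathbf f,\mathbf g)$ is routine once the well-posedness estimate is available.
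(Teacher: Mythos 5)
Your proposal is correct and follows essentially the same route as the paper's proof: the same splitting of $(T(\mathbf f,\mathbf g),(\mathbf f,\mathbf g))$ into a volume quadratic form in $(\phi,\psi)=(\mathbf f+\curl\mathbf u,\mathbf g+\mathbf u)$ minus a radiation term of the correct sign, the same use of Assumption~\ref{assum} to let the diagonal blocks dominate the cross terms, and the same well-posedness estimate to pass from $(\phi,\psi)$ back to $(\mathbf f,\mathbf g)$ (which you do directly, where the paper phrases it as a contradiction argument). The only real variation is in the absorption of the off-diagonal contribution, which you handle via Young's inequality with a free parameter, needing $\lVert\,|\mur^{-1}\xi|_F\,\rVert_{L^\infty}^2<C_1C_2$ and correctly checking that this follows from the stated smallness condition, whereas the paper estimates each cross term separately and arrives at that condition verbatim.
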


\begin{proof}
For the convenience of the presentation of this proof we will use $(\cdot,\cdot)$ and $\| \cdot \|$ indistinctively for the inner product
and norm of  $L^2(D,\mathbb{C}^3)$ and $L^2(D,\mathbb{C}^3) \times L^2(D,\mathbb{C}^3)$. 
Let $\mathbf{h}_1 = \mathbf{f}+\curl\mathbf{u}$, $\mathbf{h}_2 = \mathbf{g}+\mathbf{u}$ we have
\begin{align*}
(T(\mathbf{f},\mathbf{g}),(\mathbf{f},\mathbf{g})) &= \int_D (Q\mathbf{h}_1 - ik\mur^{-1}\xi\mathbf{h}_2)\cdot(\overline{\mathbf{h}}_1 - \curl\overline{\mathbf{u}})\ \d\mathbf{x}\\
&+ \int_D(ik\xi\mur^{-1}\mathbf{h}_1 + k^2(P-\xi\mur^{-1}\xi)\mathbf{h}_2)\cdot(\overline{\mathbf{h}}_2-\overline{\mathbf{u}})\ \d\mathbf{x} \\
&= S_1 - S_2,
\end{align*}
where
\begin{align*}
S_1 &= (Q\mathbf{h}_1,\mathbf{h}_1) + k^2((P-\xi\mur^{-1}\xi)\mathbf{h}_2,\mathbf{h}_2)  + ik(\xi\mur^{-1}\mathbf{h}_1,\mathbf{h}_2)- ik(\mur^{-1}\xi\mathbf{h}_2,\mathbf{h}_1),\\
S_2 &=\int_D(Q\mathbf{h}_1 - ik\mur^{-1}\xi\mathbf{h}_2)\cdot\curl\overline{\mathbf{u}}\ \d\mathbf{x} +  \int_D(ik\xi\mur^{-1}\mathbf{h}_1+ k^2(P-\xi\mur^{-1}\xi)\mathbf{h}_2)\cdot\overline{\mathbf{u}}\ \d\mathbf{x}.
\end{align*}
Therefore, with the fact that
$$
(\Im T(\mathbf{f},\mathbf{g}),(\mathbf{f},\mathbf{g})) = \Im(T(\mathbf{f},\mathbf{g}),(\mathbf{f},\mathbf{g}))
$$
we have
$$
(\Im T(\mathbf{f},\mathbf{g}),(\mathbf{f},\mathbf{g})) = \Im S_1 - \Im S_2.
$$
Let us first consider $ \Im S_1$. From    Assumption \ref{assum} we have
\begin{align*}
\Im S_1  \geq C_1\|\mathbf{h}_1\|^2 + k^2 C_2\|\mathbf{h}_2\|^2 
+ k\text{Re}(\xi\mur^{-1}\mathbf{h}_1,\mathbf{h}_2) - k\text{Re}(\mur^{-1}\xi\mathbf{h}_2,\mathbf{h}_1).
\end{align*}
Recall that $\mur^{-1}$ and $\xi$ are symmetric. We estimate
\begin{align*}
k\text{Re}(\xi\mur^{-1}\mathbf{h}_1,\mathbf{h}_2) &=\text{Re}(\xi\mur^{-1}\mathbf{h}_1,k\mathbf{h}_2) \\
&= \frac{1}{2}\left(	\lVert\xi\mur^{-1}\mathbf{h}_1 + k\mathbf{h}_2\rVert^2 - \lVert\xi\mur^{-1}\mathbf{h}_1\rVert^2  - k^2\lVert\mathbf{h}_2\rVert^2 \right)\\
&\geq  \frac{1}{2}\left(	\left(\lVert\xi\mur^{-1}\mathbf{h}_1\| -  \|k\mathbf{h}_2\rVert\right)^2 - \lVert\xi\mur^{-1}\mathbf{h}_1\rVert^2  - k^2\lVert\mathbf{h}_2\rVert^2 \right) \\
&= - \lVert\xi\mur^{-1}\mathbf{h}_1\|\|k\mathbf{h}_2\rVert\\
&\geq -\frac{1}{2}\left(\lVert |\xi\mur^{-1}|_F \rVert_{L^\infty}^2 \lVert\mathbf{h}_1\rVert^2 + k^2\lVert\mathbf{h}_2\rVert^2\right)\\
&= -\frac{1}{2}\left(\lVert |\mur^{-1}\xi|_F \rVert_{L^\infty}^2 \lVert\mathbf{h}_1\rVert^2 + k^2\lVert\mathbf{h}_2\rVert^2\right),
\end{align*}
 and similarly
\begin{align*}
-k\text{Re}(\mur^{-1}\xi\mathbf{h}_2,\mathbf{h}_1) &= -\text{Re}(k\mur^{-1}\xi\mathbf{h}_2,\mathbf{h}_1) \\
&= \frac{1}{2}\left(\lVert k\mur^{-1}\xi\mathbf{h}_2 - \mathbf{h}_1\rVert^2 - k^2\lVert\mur^{-1}\xi\mathbf{h}_2\rVert^2 - \lVert\mathbf{h}_1\rVert^2 \right) \\
&\geq -\frac{1}{2}\left(k^2\lVert |\mur^{-1}\xi|_F \rVert_{L^\infty}^2 \lVert\mathbf{h}_2\rVert^2 + \lVert\mathbf{h}_1\rVert^2\right).
\end{align*}
Therefore, we obtain
\begin{align*}
\Im S_1 &\geq \left[C_1 - \frac{1}{2}\left(\lVert |\mur^{-1}\xi|_F\rVert_{L^\infty}^2+1\right)\right] \lVert\mathbf{h}_1\rVert^2 + k^2\left[C_2 - \frac{1}{2}\left(\lVert |\mur^{-1}\xi|_F \rVert_{L^\infty}^2+1\right)\right]\lVert\mathbf{h}_2\rVert^2\\
&= c_1\lVert\mathbf{h}_1\rVert^2+ k^2c_2\lVert\mathbf{h}_2\rVert^2
\end{align*}
where 
\begin{align*}
c_1 &= C_1 - \frac{1}{2}\left(\lVert |\mur^{-1}\xi|_F \rVert_{L^\infty}^2+1\right) >0,\\
c_2 &= C_2 - \frac{1}{2}\left(\lVert |\mur^{-1}\xi|_F \rVert_{L^\infty}^2+1\right) >0.
\end{align*}
In order to estimate $\Im S_2$ we note that since $\mathbf{u}$ is the radiating variational solution to (\ref{equ}), for all $\mathbf{v} \in H_{\alpha}(\curll,\Omega)$ with compact support we have
\begin{align*}
\int_{\Omega}(\curl\mathbf{u}\cdot\curl\overline{\mathbf{v}} - k^2\mathbf{u}\cdot\overline{\mathbf{v}})\ \d\mathbf{x} &= \int_D(Q\mathbf{h}_1 - ik\mur^{-1}\xi\mathbf{h}_2)\cdot\curl\overline{\mathbf{v}}\ \d\mathbf{x} \\
&+  \int_D(ik\xi\mur^{-1}\mathbf{h}_1+ k^2(P-\xi\mur^{-1}\xi)\mathbf{h}_2)\cdot\overline{\mathbf{v}}\ \d\mathbf{x}.
\end{align*}
Let $\Omega_r =\{\mathbf{x}\in\Omega:|x_3|<r\}$ and consider $r>0$  such that $\overline{D} \subset \Omega_r$. Consider a scalar cut-off function $\varphi \in C^{\infty}(\R)$ such that $\varphi(t) = 1$ for $|t| <r$ and $\varphi = 0$ for $|t| >2r$. Then setting $\mathbf{v}(\x) = \varphi(x_3)\mathbf{u}(\x)$ it belongs to $ H_{\alpha}(\curll,\Omega)$ with compact support in $\Omega_{2r}$. Substituting $\v$   in the above variational form yields
$$
\int_{\Omega}(\curl\mathbf{u}\cdot\curl(\varphi\overline{\mathbf{u}}) - k^2\mathbf{u}\cdot(\varphi\overline{\mathbf{u}}))\,\d\mathbf{x} = S_2.
$$
Therefore, using  Green's identities and the fact that $\mathbf{u}$ solves $\curl^2\mathbf{u}-k^2\mathbf{u} = 0$ in $\Omega_{2r}\setminus \Omega_r$ we have
\begin{align*}
S_2 &= \int_{\Omega_r}(|\curl\mathbf{u}|^2 - k^2|\mathbf{u}|^2) \d\mathbf{x} + \int_{\Omega_{2r}\setminus \Omega_r}(\curl\mathbf{u}\cdot\curl(\varphi\overline{\mathbf{u}}) - k^2\mathbf{u}\cdot(\varphi\overline{\mathbf{u}})) \d\mathbf{x} \\
&= \int_{\Omega_r}(|\curl\mathbf{u}|^2 - k^2|\mathbf{u}|^2) \d\mathbf{x}  + \int_{\Omega_{2r}\setminus \Omega_r}(\curl^2\mathbf{u}-k^2\mathbf{u})\cdot(\varphi\mathbf{u}) \d\mathbf{x} \\
 & \qquad + \left(\int_{\{x_3 =  r\} \cap \Omega} - \int_{\{x_3 = - r\} \cap \Omega}\right)( e_3\times \curl\mathbf{u})\cdot\overline{\mathbf{u}}\, \d\mathbf{x}\\
&= \int_{\Omega_r}(|\curl\mathbf{u}|^2 - k^2|\mathbf{u}|^2) \d\mathbf{x} +  \left(\int_{\{x_3 =  r\} \cap \Omega} - \int_{\{x_3 = - r\} \cap \Omega}\right)( e_3\times \curl\mathbf{u})\cdot\overline{\mathbf{u}}\, \d\mathbf{x}.
\end{align*}
Therefore, taking the imaginary part of both sides we have
$$
\Im S_2 =  \Im \left(\int_{\{x_3 =  r\} \cap \Omega} - \int_{\{x_3 = - r\} \cap \Omega}\right)( e_3\times \curl\mathbf{u})\cdot\overline{\mathbf{u}}\, \d\mathbf{x}.
$$
Using the Rayleigh expansion radiation condition for $\u$ and a straightforward calculation  give
$$
 \lim_{r\to\infty} \left(\int_{\{x_3 =  r\} \cap \Omega} - \int_{\{x_3 = - r\} \cap \Omega}\right)( e_3\times \curl\mathbf{u})\cdot\overline{\mathbf{u}}\, \d\mathbf{x}
 = -i4\pi^2\sum_{m: \beta_m>0}\beta_m(|\widehat{u}_m^{+}|^2 + |\widehat{u}_m^{-}|^2).
$$
We thus obtain
$$
\Im S_2 = -4\pi^2\sum_{m: \beta_m>0}\beta_m(|\widehat{u}_m^{+}|^2 + |\widehat{u}_m^{-}|^2) \leq 0.
$$
From the estimates for $\Im S_1$ and $\Im S_2$ and the fact that $(\Im T(\mathbf{f},\mathbf{g}),(\mathbf{f},\mathbf{g})) = \Im S_1 - \Im S_2$, we obtain
\begin{equation}\label{ImT1}
(\Im T(\mathbf{f},\mathbf{g}),(\mathbf{f},\mathbf{g})) \geq c_1 \lVert\mathbf{h}_1\rVert^2 + k^2c_2\lVert\mathbf{h}_2\rVert^2.
\end{equation}
Now suppose that there is no $c>0$ such that (\ref{ImT}) holds. Then there exists a sequence $\{(\mathbf{f}_j,\mathbf{g}_j)\}_j \subset L^2(D,\mathbb{C}^3)\times L^2(D,\mathbb{C}^3)$ such that $\lVert (\mathbf{f}_j,\mathbf{g}_j)\rVert=1$ and
$$
(\Im T(\mathbf{f}_j,\mathbf{g}_j),(\mathbf{f}_j,\mathbf{g}_j)) \xrightarrow{j\to\infty} 0.
$$
By (\ref{ImT1}) we have $\mathbf{h}_1^j = \mathbf{f}_j + \curl\mathbf{u}_j \xrightarrow{j\to\infty} 0$ and $\mathbf{h}_2^j = \mathbf{g}_j + \mathbf{u}_j \xrightarrow{j\to\infty} 0$ in $L^2(D,\mathbb{C}^3)$ where $\mathbf{u}_j$ is the radiating variational solution to 
\begin{equation*}
\curl^2 \mathbf{u}_j-k^2\mathbf{u}_j = k^2\mathcal{S}(\mathbf{u}_j,\mathbf{f}_j,\mathbf{g}_j) + \curl \mathcal{T}(\mathbf{u}_j,\mathbf{f}_j,\mathbf{g}_j).
\end{equation*}
Then $\mathbf{u}_j$ also satisfies
\begin{align*}
\mathbf{u}_j &= \mathcal{A}\mathcal{S}(\mathbf{u}_j,\mathbf{f}_j,\mathbf{g}_j) + \mathcal{B}\mathcal{T}(\mathbf{u}_j,\mathbf{f}_j,\mathbf{g}_j) \\
&= \mathcal{A}\left[\frac{i}{k}\xi\mur^{-1}(\mathbf{f}_j+\curl\mathbf{u}_j) + (P-\xi\mur^{-1}\xi)(\mathbf{g}_j+\mathbf{u}_j)\right] \\
&+ \mathcal{B}\left[Q(\mathbf{f}_j+\curl\mathbf{u}_j) - ik\mur^{-1}\xi(\mathbf{g}_j+\mathbf{u}_j)\right].
\end{align*}
Hence, there exist $c_3,c_4>0$ such that
\begin{align*}
\lVert \mathbf{u}_j \rVert_{H(\curll,D)} &\leq c_3 \left( \frac{1}{k}\lVert |\xi\mur^{-1}|_F \rVert_{L^\infty}\lVert \mathbf{f}_j+\curl\mathbf{u}_j \rVert + \lVert |P-\xi\mur^{-1}\xi|_F \rVert_{L^\infty}\lVert \mathbf{g}_j+\mathbf{u}_j\rVert \right)\\
&+ c_4 \left(\lVert |Q|_F\rVert_{L^\infty}\lVert \mathbf{f}_j+\curl\mathbf{u}_j \rVert + k\lVert | \mur^{-1}\xi |_F \rVert_{L^\infty}\lVert \mathbf{g}_j+\mathbf{u}_j\rVert  \right).
\end{align*}
Thus $\mathbf{u}_j\xrightarrow{j\to\infty}0$ in $H(\curll,D)$ and therefore $(\mathbf{f}_j,\mathbf{g}_j)\xrightarrow{j\to\infty}0$ in $L^2(D,\mathbb{C}^3)\times L^2(D,\mathbb{C}^3)$ which contradicts $\lVert (\mathbf{f}_j,\mathbf{g}_j)\rVert=1$.
\end{proof}

\begin{lemma}\label{fac}
We have the following factorization 
$$
WN = H^*TH.
$$
\end{lemma}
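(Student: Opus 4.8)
The plan is to trace a generic sequence $(a_m)\in\ell^2(\mathbb{Z}^2,\mathbb{C}^4)$ through both sides of the claimed identity and verify that they produce the same output. First I would put $(\mathbf{f},\mathbf{g}) = H(a_m)$ and let $\mathbf{u}$ be the radiating variational solution of \eqref{equ} with these data; by the very definition of the near field operator, $N(a_m)$ is the Rayleigh sequence of the first two components of this $\mathbf{u}$, so $WN(a_m)$ is obtained by applying the matrix $W$ to that sequence.

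Next I would unwind the definition of $T$. Performing the matrix--vector product in $T(\mathbf{f},\mathbf{g})$ with $\mathbf{h}_1=\mathbf{f}+\curl\mathbf{u}$ and $\mathbf{h}_2=\mathbf{g}+\mathbf{u}$, the first block reads $Q\mathbf{h}_1 - ik\mur^{-1}\xi\mathbf{h}_2 = \mathcal{T}(\mathbf{u},\mathbf{f},\mathbf{g})$ and the second block reads $ik\xi\mur^{-1}\mathbf{h}_1 + k^2(P-\xi\mur^{-1}\xi)\mathbf{h}_2 = k^2\mathcal{S}(\mathbf{u},\mathbf{f},\mathbf{g})$, both of which are supported in $D$. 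Hence $T(\mathbf{f},\mathbf{g}) = \big(\mathcal{T}(\mathbf{u},\mathbf{f},\mathbf{g}),\, k^2\mathcal{S}(\mathbf{u},\mathbf{f},\mathbf{g})\big)\in L^2(D,\mathbb{C}^3)\times L^2(D,\mathbb{C}^3)$, so $E$ may legitimately be applied to it.

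Then I would apply $E$. By definition $E\big(\mathcal{T},k^2\mathcal{S}\big)$ is the Rayleigh sequence of the first two components of the radiating solution $\mathbf{v}$ of $\curl^2\mathbf{v}-k^2\mathbf{v} = \curl\mathcal{T} + k^2\mathcal{S}$, which is precisely equation \eqref{equ}. By the assumed well-posedness of \eqref{equ} (equivalently \eqref{indif}) this solution is unique, so $\mathbf{v}=\mathbf{u}$, and therefore $E\big(T(\mathbf{f},\mathbf{g})\big) = N(a_m)$. Finally, invoking Lemma~\ref{H*} we have $H^* = WE$, whence $H^*TH(a_m) = H^*\big(T(\mathbf{f},\mathbf{g})\big) = WE\big(T(\mathbf{f},\mathbf{g})\big) = WN(a_m)$. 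Since $(a_m)$ was arbitrary, $WN = H^*TH$ follows.

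I do not expect a serious obstacle: the only points needing care are the bookkeeping check that the two blocks emerging from $T$ are exactly $\mathcal{T}$ and $k^2\mathcal{S}$ (against the stated definitions of $\mathcal{S}$ and $\mathcal{T}$), the remark that these source terms lie in $L^2(D,\mathbb{C}^3)$, and matching the placement of the $\curl$ in \eqref{E} (the first argument of $E$ is the one differentiated) with the placement of $\curl\mathcal{T}$ in \eqref{equ}, so that uniqueness correctly identifies $\mathbf{v}$ with $\mathbf{u}$. This last correspondence is the one spot where an ordering slip could occur, so I would state it explicitly.
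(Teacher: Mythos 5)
Your proposal is correct and is essentially the paper's argument written out in full: the paper simply cites the factorizations $N=GH$ and $G=ET$ together with $H^*=WE$ from Lemma~\ref{H*}, and your block-by-block computation of $T(\mathbf{f},\mathbf{g})=(\mathcal{T},k^2\mathcal{S})$ followed by the identification of $E(T(\mathbf{f},\mathbf{g}))$ with the Rayleigh sequence of $\mathbf{u}$ is exactly the verification of $G=ET$ (and $N=GH$) that the authors leave to the reader. The only cosmetic point is that the uniqueness used to conclude $\mathbf{v}=\mathbf{u}$ is that of the fixed-source problem \eqref{E}, which the paper notes holds for all $k>0$, rather than the assumed well-posedness of \eqref{equ}.
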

\begin{proof}
Together with $H^* = WE$ from  Lemma~\ref{H*}, the proof follows from the  factorizations $N = GH$ and $G = ET$ that can be easily verified.
\end{proof}
From the above lemma we  obtain
$$
\Im (WN) = H^*(\Im T) H,
$$
and more importantly, we can deduce the following result about the connection between the ranges of $H^*$ and $\Im(WN)$. 

\begin{theorem}\label{cha1}
$\Im (WN)$ is a positive definite, compact and self-adjoint operator on $\ell^2(\mathbb{Z}^2,\mathbb{C}^4)$ with an eigensystem $(\lambda_j,(\phi_m^j))_{j\in\mathbb{N}}$. Thus its square root
$$
(\Im (WN))^{1/2}(\psi_m) =\sum_{j=0}^{\infty}\sqrt{\lambda_j}\ (\psi_m,\phi_m^j)_{\ell^2(\mathbb{Z}^2,\mathbb{C}^4)} \phi^j_m,\quad (\psi_m) \in \ell^2(\mathbb{Z}^2,\mathbb{C}^4),
$$
is well-defined and
$$
\mathcal{R}(H^*) = \mathcal{R}((\Im (WN))^{1/2}).
$$
\end{theorem}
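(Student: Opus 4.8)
The plan is to invoke the standard range--identity machinery of the Factorization method, now that all the structural pieces are in place: the factorization $\Im(WN)=H^*(\Im T)H$ obtained from Lemma~\ref{fac}, the compactness and injectivity of $H$, and the coercivity of $\Im T$ proved in Lemma~\ref{coercive}.

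First I would record the elementary properties. Writing $\Im(WN)=\frac{1}{2i}(WN-(WN)^*)$ and using $WN=H^*TH$, one gets at once $\Im(WN)=H^*(\Im T)H$, an operator that is self-adjoint by construction; it is compact because $H$, hence $H^*$, is compact and $\Im T$ is bounded. For positive definiteness, for any nonzero $(\psi_m)\in\ell^2(\mathbb{Z}^2,\mathbb{C}^4)$ one has
\[
(\Im(WN)(\psi_m),(\psi_m))_{\ell^2}=((\Im T)H(\psi_m),H(\psi_m))\ \geq\ c\,\lVert H(\psi_m)\rVert^2\ >\ 0,
\]
the inequality being Lemma~\ref{coercive} and the strict positivity following from injectivity of $H$. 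A positive, compact, self-adjoint operator possesses an eigensystem $(\lambda_j,(\phi_m^j))_{j\in\mathbb{N}}$ with $\lambda_j>0$ and $\lambda_j\to 0$, so the spectral square root displayed in the statement is well-defined.

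The substantive step is the range identity $\mathcal{R}(H^*)=\mathcal{R}((\Im(WN))^{1/2})$. Here I would use that $\Im T$ is not merely coercive but, being also self-adjoint and bounded, is a positive and boundedly invertible operator on $L^2(D,\mathbb{C}^3)\times L^2(D,\mathbb{C}^3)$; by the functional calculus it then has a self-adjoint, positive, boundedly invertible square root $(\Im T)^{1/2}$. Setting $B:=(\Im T)^{1/2}H$, self-adjointness of $(\Im T)^{1/2}$ gives
\[
\Im(WN)=H^*(\Im T)^{1/2}(\Im T)^{1/2}H=B^*B.
\]
I would then apply the classical lemma (see, e.g., Kirsch--Grinberg) that for any bounded operator $B$ between Hilbert spaces $\mathcal{R}(B^*)=\mathcal{R}((B^*B)^{1/2})$, giving $\mathcal{R}((\Im(WN))^{1/2})=\mathcal{R}(B^*)=\mathcal{R}(H^*(\Im T)^{1/2})$. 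Finally, since $(\Im T)^{1/2}$ is a bijection of $L^2(D,\mathbb{C}^3)\times L^2(D,\mathbb{C}^3)$ onto itself, $\mathcal{R}(H^*(\Im T)^{1/2})=\mathcal{R}(H^*)$, which closes the argument.

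I expect the only genuine obstacle to be bookkeeping rather than depth: one must verify carefully that $\Im T$ is self-adjoint and boundedly invertible (the latter being exactly Lemma~\ref{coercive} together with boundedness of $T$), so that $(\Im T)^{1/2}$ and its inverse exist and the chain of range equalities is valid. Everything else is a direct transcription of the abstract Factorization-method lemma to the present setting.
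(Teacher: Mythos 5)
Your proposal is correct and follows the same route as the paper: the self-adjointness, compactness, and positive definiteness of $\Im(WN)$ are established exactly as in the text from the factorization $\Im(WN)=H^*(\Im T)H$, Lemma~\ref{coercive}, and the injectivity of $H$. For the range identity the paper simply cites Corollary 1.22 of \cite{Kirsc2008}, whereas you supply its standard proof in this self-adjoint coercive setting (square root of $\Im T$, the identity $\mathcal{R}(B^*)=\mathcal{R}((B^*B)^{1/2})$, and bijectivity of $(\Im T)^{1/2}$); this is a correct, self-contained unpacking of the cited result rather than a different argument.
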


\begin{proof}
$\Im (WN)$ is clearly self-adjoint and its compactness follows from the compactness of $H$. Moreover, for $(a_m) \in \ell^2(\mathbb{Z}^2,\mathbb{C}^4)$ that is a nonzero sequence we have
\begin{align*}
(\Im (WN)(a_m),(a_m))_{\ell^2(\mathbb{Z}^2,\mathbb{C}^4)} &= (H^*(\Im T)H(a_m),(a_m))_{\ell^2(\mathbb{Z}^2,\mathbb{C}^4)} \\
&= ((\Im T)H(a_m),H(a_m))_{L^2(D,\mathbb{C}^3) \times L^2(D,\mathbb{C}^3)}\\
&\geq c\lVert H(a_m) \rVert_{L^2(D,\mathbb{C}^3) \times L^2(D,\mathbb{C}^3)}^2 > 0
\end{align*}
for some $c>0$ thanks to the coercivity of $\Im T$ and injectivity of $H$. Hence all the eigenvalues of $\Im(WN)$ are positive.  The range identity follows directly from the Corollary 1.22 in \cite{Kirsc2008}.
\end{proof}

Combining Theorem \ref{cha2} and Theorem \ref{cha1} we  have the following characterization of $D$.
\begin{theorem}
A point $\mathbf{z}\in \Omega$ belongs to $D$ if and only if $W(\widehat{\Psi}_{\mathbf{z},m}) \in \mathcal{R}((\Im (WN))^{1/2})$. 
If we denote by $(\lambda_j,(\phi_m^j))_{j\in\mathbb{N}}$ the eigensystem of $\Im(WN)$ then the above criterion is equivalent to
\begin{equation}
\label{picard}
 \sum_{j=0}^{\infty} \frac{\left\lvert\left( W(\widehat{\Psi}_{\mathbf{z},m}), (\phi_m^j)\right)_{\ell^2(\mathbb{Z}^2,\mathbb{C}^4)} \right\rvert^2}{\lambda_j} < \infty.
\end{equation}
\end{theorem}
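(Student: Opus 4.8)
The plan is to obtain this theorem as a direct corollary of the two characterizations already established, with no new analysis required. First I would invoke Theorem~\ref{cha2}, which asserts that $\mathbf{z}\in D$ if and only if $W(\widehat{\Psi}_{\mathbf{z},m})\in\mathcal{R}(H^*)$. Then I would apply the range identity $\mathcal{R}(H^*) = \mathcal{R}((\Im(WN))^{1/2})$ from Theorem~\ref{cha1}. Composing these two equivalences immediately yields the first assertion: $\mathbf{z}\in D$ if and only if $W(\widehat{\Psi}_{\mathbf{z},m})\in\mathcal{R}((\Im(WN))^{1/2})$.

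For the equivalence with the series criterion~\eqref{picard}, I would recall from Theorem~\ref{cha1} that $\Im(WN)$ is a positive definite, compact, self-adjoint operator on $\ell^2(\mathbb{Z}^2,\mathbb{C}^4)$; positive definiteness gives injectivity, so its eigenfunctions $(\phi_m^j)_{j\in\mathbb{N}}$ form an orthonormal basis of $\ell^2(\mathbb{Z}^2,\mathbb{C}^4)$ with eigenvalues $\lambda_j>0$, $\lambda_j\to 0$. Since the operator is positive and self-adjoint, $(\Im(WN))^{1/2}$ has the spectral representation displayed in the statement of Theorem~\ref{cha1}, with the same eigenbasis and singular values $\sqrt{\lambda_j}$. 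The abstract Picard criterion for the range of a compact operator — exactly Picard's theorem as stated in~\cite{Kirsc2008}, in the same spirit as the Corollary~1.22 already used in the proof of Theorem~\ref{cha1} — then says that $\psi\in\mathcal{R}((\Im(WN))^{1/2})$ if and only if $\sum_{j=0}^{\infty}\lvert(\psi,\phi_m^j)_{\ell^2(\mathbb{Z}^2,\mathbb{C}^4)}\rvert^2/\lambda_j<\infty$. Taking $\psi = W(\widehat{\Psi}_{\mathbf{z},m})$ produces precisely~\eqref{picard}, completing the proof.

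There is no genuine obstacle in this argument: all the substantive work has already been done in the coercivity of $\Im T$ (Lemma~\ref{coercive}), the factorization $WN = H^*TH$ (Lemma~\ref{fac}), and the geometric characterization of $D$ through $\mathcal{R}(H^*)$ (Theorem~\ref{cha2}). The only minor points to verify are that $\Im(WN)$ is injective, so that the Picard series genuinely detects membership in the range and no separate treatment of a kernel is needed, and that the square root of a positive compact self-adjoint operator inherits the stated eigendecomposition; both are immediate consequences of Theorem~\ref{cha1}.
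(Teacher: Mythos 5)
Your proposal is correct and follows exactly the route the paper intends: the paper states this theorem as an immediate consequence of combining Theorem~\ref{cha2} with the range identity of Theorem~\ref{cha1}, and the series criterion~\eqref{picard} is just Picard's criterion for the range of $(\Im(WN))^{1/2}$ applied to the eigensystem established there. Your added remarks on injectivity and the spectral representation of the square root are sound and consistent with what Theorem~\ref{cha1} already provides.
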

This is a necessary and sufficient characterization for $D$ from the range of $(\Im (WN))^{1/2}$  which gives us the unique determination of $D$. It also provides a fast way to reconstruct $D$ by plotting the  reciprocal value of the series~\eqref{picard} for many points $\z$ sampling some domain that contains $D$.

\section{Numerical examples}

We present in this section some numerical 
examples for imaging of bi-anisotropic periodic structures via the Picard criterion~\eqref{picard}.
For $M  \in \N$, we set
\[
  \Z^2_{M} = \{j  = (j_1,j_2) \in\Z^2: -M/2+1\leq j_1,j_2\leq M/2 \}.
\]
 To generate the synthetic near-field data for the numerical examples 
 we solve the direct problem using the spectral  solver studied in~\cite{Nguye2015}.
More precisely,  the direct problem is solved 
for the incident plane waves $\varphi^{(l)\pm}_j$  for $j\in \Z^2_{M}$, 
and the Rayleigh coefficients of the scattered fields are computed on $x_3 = \pm 1$, again for all 
indices in $\Z^2_{M}$. Let $\bm{N}_{M}$ be the block matrix of
the discretized near-field operator $N$
\begin{equation}
  \label{NM}
  \bm{N}_{M} =  
  \left( \begin{array}{ccccc}
    \left( (\hat{u}^{+}_{1,n})^{(1)+}_j \right)_{j,n} & \left( (\hat{u}^{+}_{1,n})^{(1)-}_j \right)_{j,n} & \left( (\hat{u}^{+}_{1,n})^{(2)+}_j \right)_{j,n} & \left( (\hat{u}^{+}_{1,n})^{(2)-}_j \right)_{j,n} \\
    \left( (\hat{u}^{+}_{2,n})^{(1)+}_j \right)_{j,n} & \left( (\hat{u}^{+}_{2,n})^{(1)-}_j \right)_{j,n} & \left( (\hat{u}^{+}_{2,n})^{(2)+}_j \right)_{j,n} & \left( (\hat{u}^{+}_{2,n})^{(2)-}_j \right)_{j,n} \\
    \left( (\hat{u}^{-}_{1,n})^{(1)+}_j \right)_{j,n} & \left( (\hat{u}^{-}_{1,n})^{(1)-}_j \right)_{j,n} & \left( (\hat{u}^{-}_{1,n})^{(2)+}_j \right)_{j,n} & \left( (\hat{u}^{-}_{1,n})^{(2)-}_j \right)_{j,n} \\
    \left( (\hat{u}^{-}_{2,n})^{(1)+}_j \right)_{j,n} & \left( (\hat{u}^{-}_{2,n})^{(1)-}_j \right)_{j,n} & \left( (\hat{u}^{-}_{2,n})^{(2)+}_j \right)_{j,n} & \left( (\hat{u}^{-}_{2,n})^{(2)-}_j \right)_{j,n} 
  \end{array} \right),
\end{equation}
where the indices $j,n$ in each subblock belong both to $\Z^2_{M}$, and
 $\hat{u}^{\pm}_{(1,2),n}$ are the first two components of the 
Rayleigh coefficients of the scattered field  
in~\eqref{rc}.  The notation 
$( \, \cdot \, )^{(l)\pm}_j$ for $l=1,2$ indicates the dependence of these 
coefficients on the corresponding incident wave $\varphi^{(l)\pm}_j$. 

Let
$\bm{WN}_{M}$ be the discretization of  $WN$. The Hermitian matrix
$\Im(\bm{WN}_{M})$ has an eigendecomposition
$ \Im(\bm{WN}_{M}) = \bm{V} \bm{D} \bm{V}^{-1}$,
where $\bm{D}$ is the diagonal matrix containing  
$4M^2$ 
eigenvalues $\bm{\lambda}_n$ of
$\Im(\bm{WN}_{M})$ and $\bm{V}$ is an orthogonal matrix containing the eigenvectors $(\bm{\varphi}_{j,n})_{j=1}^{4M^2}$.  Then
\begin{equation}
  \label{discr12}
( \Im(\bm{WN}_{M}))^{1/2} = \bm{V} \, | \bm{D}|^{1/2} \, \bm{V}^{-1}.
\end{equation}
Then the criterion~\eqref{picard} is numerically exploited for imaging 
by plotting the function 
\begin{align}
\label{discreteCriterion}
 \z \mapsto P_{M}(\z) 
 = \Bigg[\sum_{n=1}^{4M^2} \frac{|A_n(\z)|^2}{\bm{\lambda}_n}\Bigg]^{-1} ,
 \end{align}
where 
 $A_n(\z) = \sum_{j=1}^{4M^2} \bm{W}(\widehat{\Psi}_{\z,j}) \overline{\bm{\phi}_{j,n}}.$
If the series in~\eqref{discreteCriterion} approximates the true value of 
the exact Picard series in~\eqref{picard}, then $P_{M}$ 
should be very small outside of $D$ and considerably larger inside $D$. 

To consider noise in the scattering data
 we add a complex-valued noise matrix $\bm{X}$ 
containing random  numbers whose real and imaginary parts are uniformly distributed on $(-1,1)$ to the data matrix $\bm{N}_{M}$.
Denoting by $\delta$ the noise level, the
noisy data matrix $(\bm{N}_{M})_{\delta}$ is then given by
\begin{align*}
 (\bm{N}_{M})_{\delta} 
 = \bm{N}_{M}
   + \delta\frac{\bm{X}}{\|\bm{X}\|} \left\|\bm{N}_{M} \right\|,
\end{align*}
where the matrix norm $\|\cdot\|$ is the Frobenius norm. 
For such noisy  data, the eigenvalue decomposition 
in~\eqref{discr12} has to be replaced by a singular value decomposition, 
that  will not be detailed here. We truncate the singular values to regularize 
the Factorization method. For all of the examples below we only keep the singular
values that are greater than or equal to $10^{-2}$ for the regularization. 

We consider four numerical examples for which the periodic structures are motivated by two-dimensional
photonic crystals.  Here are the detailed information of the periodic structures 
we consider in this section.

a)  We consider the  structure of periodically aligned balls. The reconstruction result for this example is presented in Figure~\ref{fig1}. Recall that $\ol{D} = [\mathrm{supp}(Q) \cup \mathrm{supp}(P)] \cap \Omega$. In this example $D$ is given by
\begin{align*}
D &= \left\{(x_1,x_2,x_3)^\top:\left(x_1-\frac{\pi}{2}\right)^2 + \left(x_2-\frac{\pi}{2}\right)^2 + x_3^2 < 0.6^2 \right\} \\
&\cup \left\{(x_1,x_2,x_3)^\top:\left(x_1+\frac{\pi}{2}\right)^2 + \left(x_2-\frac{\pi}{2}\right)^2 + x_3^2 < 0.6^2 \right\} \\
&\cup \left\{(x_1,x_2,x_3)^\top:\left(x_1-\frac{\pi}{2}\right)^2 + \left(x_2+\frac{\pi}{2}\right)^2 + x_3^2 < 0.6^2 \right\} \\
&\cup \left\{(x_1,x_2,x_3)^\top:\left(x_1+\frac{\pi}{2}\right)^2 + \left(x_2+\frac{\pi}{2}\right)^2 + x_3^2 < 0.6^2 \right\}.
\end{align*}

b)  We consider the  structure of periodically aligned bars. The reconstruction result for this example is presented in Figure~\ref{fig2}. In this example $D$ is given by
\begin{align*}
D &= \left\{(x_1,x_2,x_3)^\top:x_1^2 + x_3^2 < \left(\frac{\pi}{6}\right)^2 \right\} \\
&\cup \left\{(x_1,x_2,x_3)^\top:\left(x_1-\pi\right)^2 + x_3^2 < \left(\frac{\pi}{6}\right)^2 \right\} \\
&\cup \left\{(x_1,x_2,x_3)^\top:\left(x_1+\pi\right)^2 + x_3^2 < \left(\frac{\pi}{6}\right)^2 \right\}.
\end{align*}

c)   We consider the  structure of periodically aligned cubes. The reconstruction result for this example is presented in Figure~\ref{fig3}. In this example $D$ is given by
$$
D = \left\{(x_1,x_2,x_3)^\top: |x_1|<\frac{\pi}{2},\ |x_2|<\frac{\pi}{2},\ |x_3|<0.3 \right\}.
$$

d) We consider a strip with periodically aligned holes. The reconstruction result for this example is presented in Figure~\ref{fig4}. In this example $D$ is given by
$$
D = \left\{(x_1,x_2,x_3)^\top: x_1^2 + x_2^2 > \left(\frac{\pi}{2}\right)^2,\ |x_3|<0.3 \right\}.
$$

The numerical implementation  is done using Matlab. In  all of the examples in this section  we use the following parameters.
\begin{align*}
\text{sampling domain}= (-\pi,\pi)^2\times (-1,1),\quad  k = \pi,\\
 \quad M = 20 \text{ (i.e. 1600 incident plane waves)},  \\
\quad \delta = 2\% \text{ (noise level)}, \quad \alpha = (\pi/2, \pi/2, 0).
\end{align*}
The sampling domain is probed by $32^3$ sampling points.  
Recall that the data are measured at $\{x_3 = \pm 1\}$.
The  matrix-valued coefficients $\epsr, \mur^{-1}, \xi$ are given 
by~\eqref{coeff1}--\eqref{coeff2} in all of the examples. The Rayleigh expansion in the radiation condition~\eqref{rc} for $\u$ is truncated in $\Z_M^2$. Hence, for $M = 20$, we have 400 Rayleigh coefficients in each block of the data matrix~\eqref{NM}.
There are 32 coefficients corresponding to   propagating modes in these 400 Rayleigh  coefficients, and the rest corresponds
to evanescent modes which are necessarily important for the quality of the reconstructions. The important role of  evanescent modes in the numerical implementation has also been observed in previous works, see for example~\cite{Arens2005, Lechl2013b, Jiang2017}.
We can see in the Figures~\ref{fig1},\ref{fig2},\ref{fig3}, and \ref{fig4} that the Factorization method is able to provide reasonable reconstructions
for different types of bi-anisotropic periodic structures. 

 \begin{figure}[ht!]
\centering
\subfloat[Exact geometry viewed at $\Omega\cap \{z=0\} $]{\includegraphics[width=4.5cm]{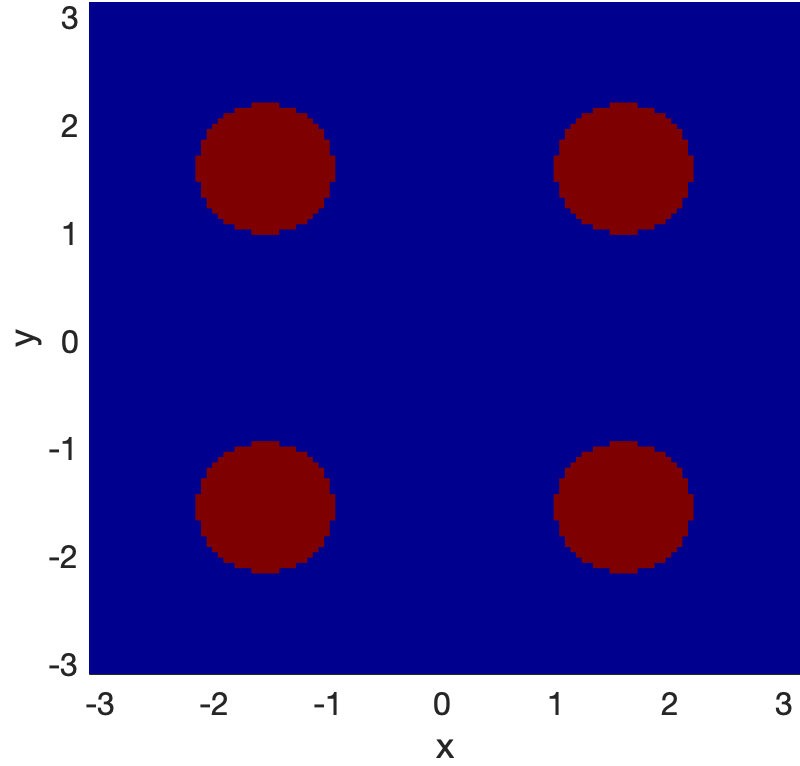}} \hspace{2cm}
\subfloat[Reconstruction  viewed at $\Omega \cap \{z=0\}$]{\includegraphics[width=4.5cm]{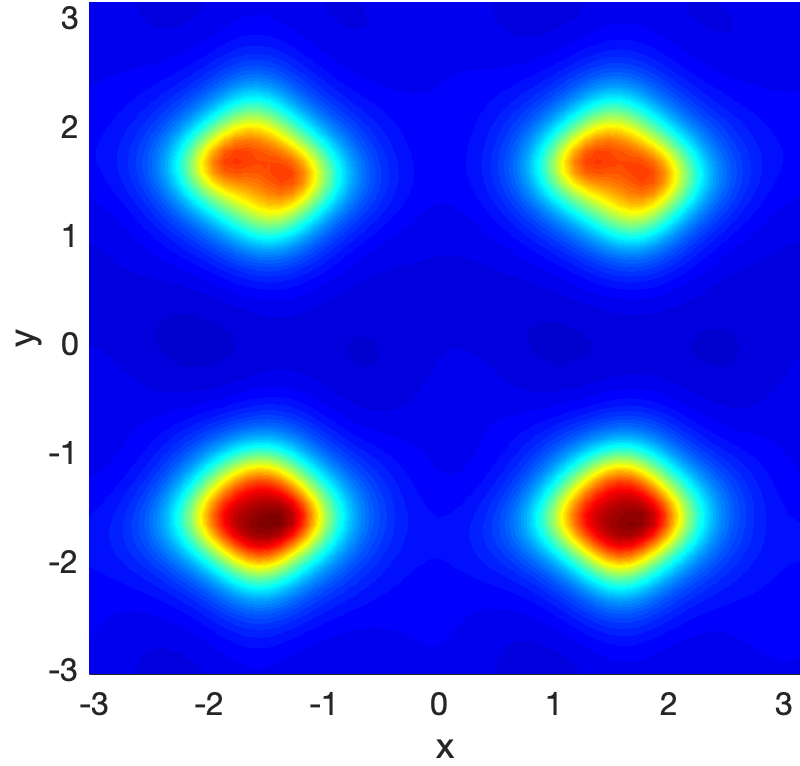}}\\
\subfloat[Exact geometry in $(-3\pi, 3\pi)^2\times (-2,2)$]{\includegraphics[width=7.5cm]{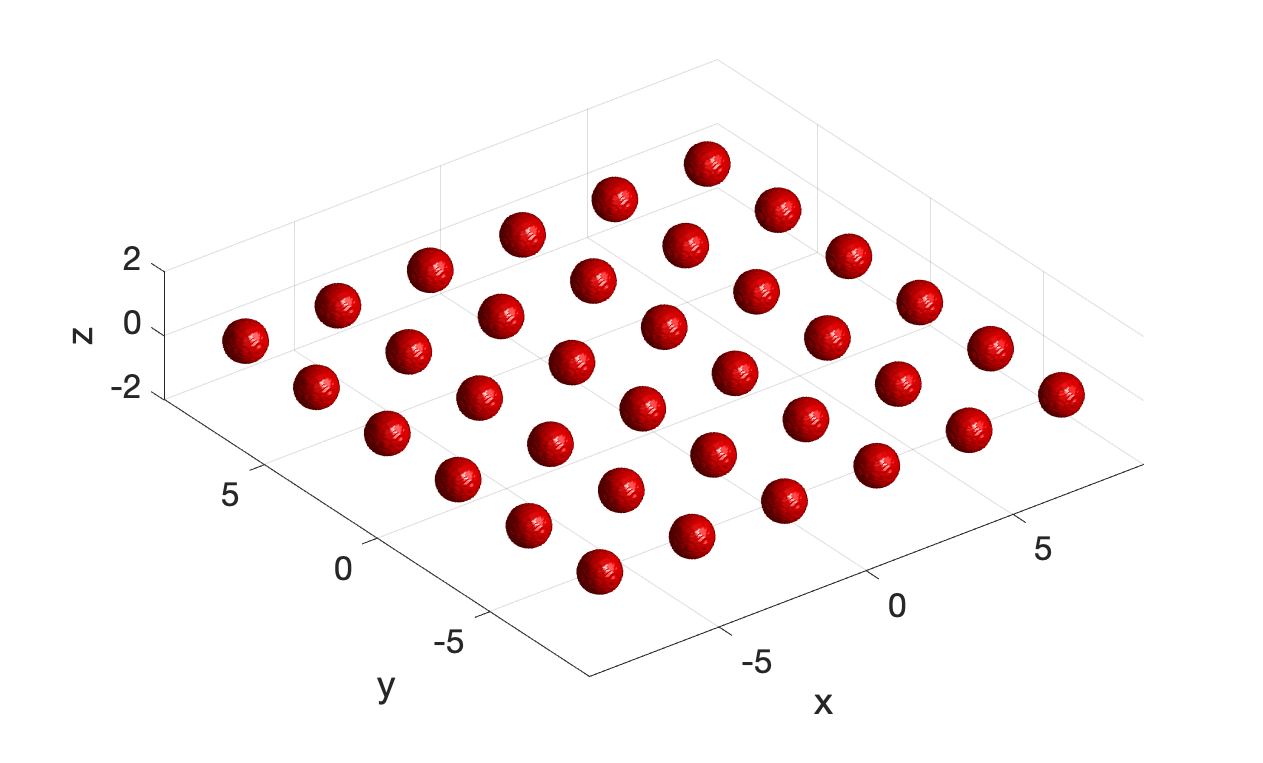}} \hspace{-0.5cm}
\subfloat[Reconstruction]{\includegraphics[width=7.5cm]{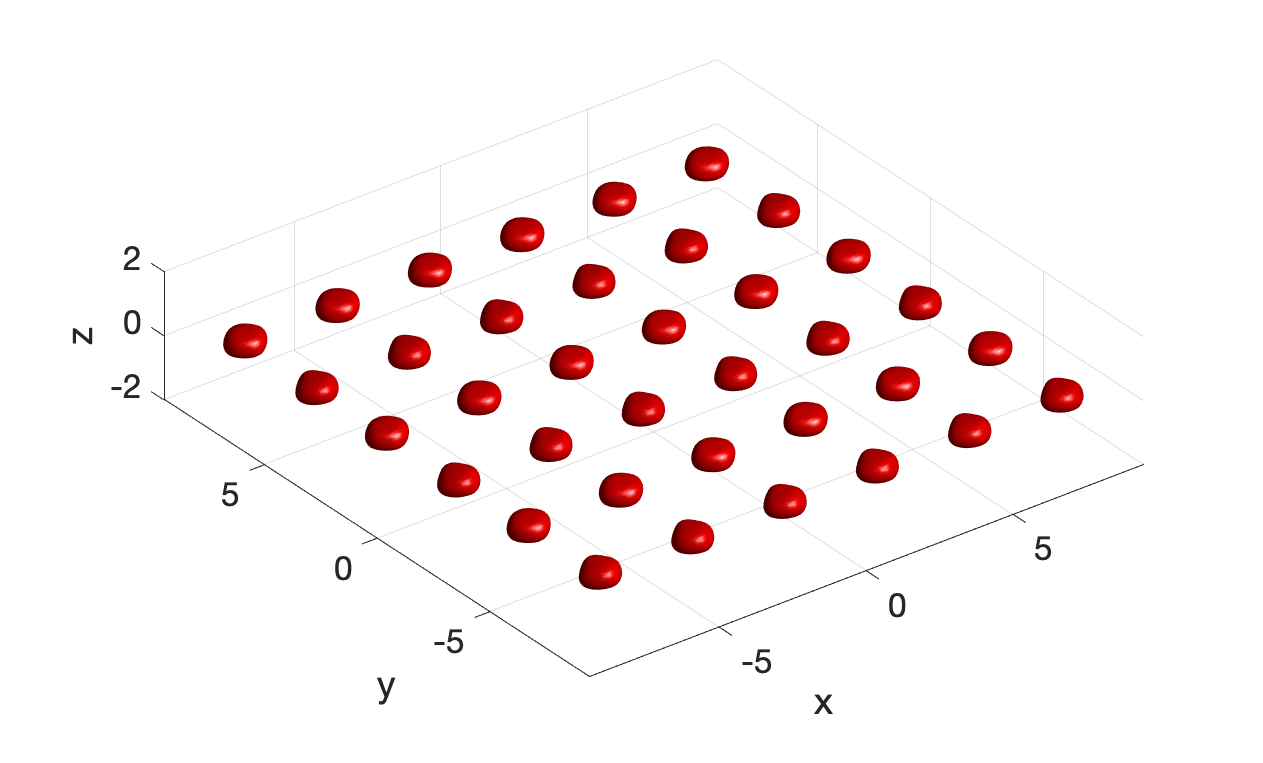}}\\
\caption{Shape reconstruction of  periodically aligned balls. There is  2$\%$ artificial noise in the data.
The isovalue for the isosurface plotting is chosen to be one third of the maximal value of the computed image. }
\label{fig1}
\end{figure}

 \begin{figure}[h!]
\centering
\subfloat[Exact geometry viewed at $\Omega\cap \{z=0\}$ ]{\includegraphics[width=4.5cm]{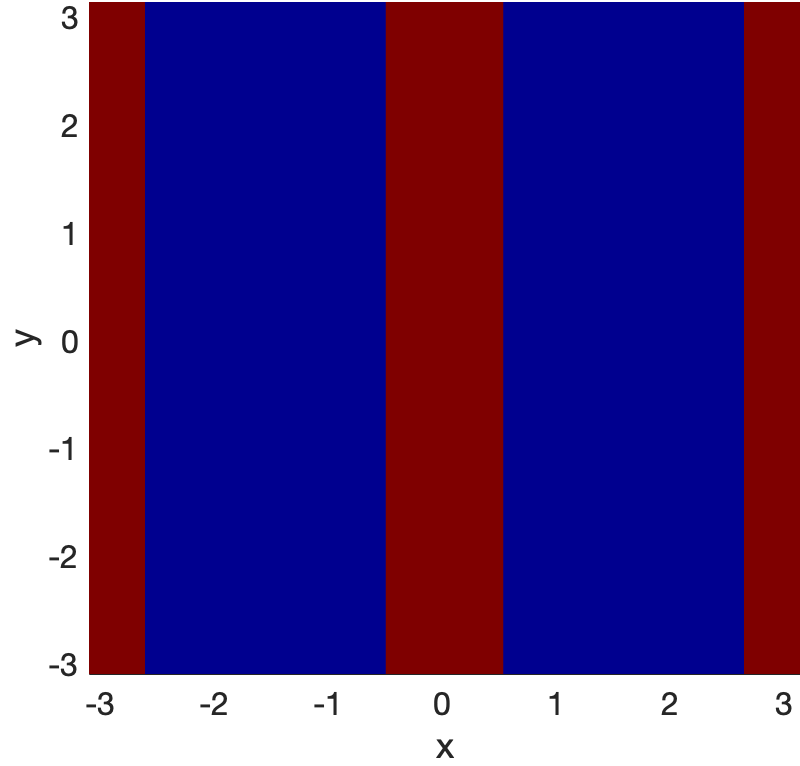}} \hspace{2cm}
\subfloat[Reconstruction viewed at $\Omega\cap \{z=0\}$ ]{\includegraphics[width=4.5cm]{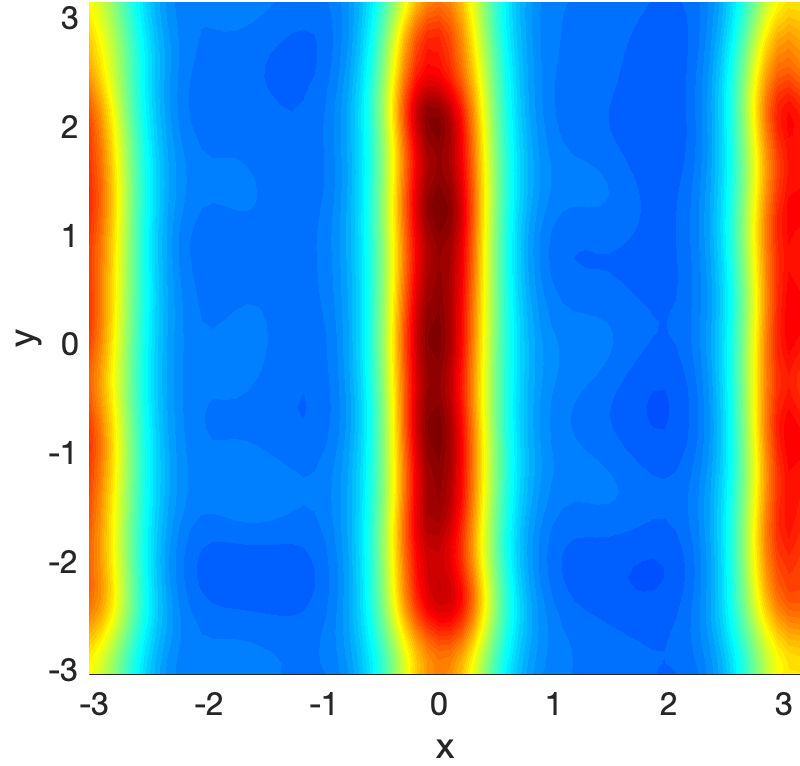}}\\
\subfloat[Exact geometry in $(-3\pi, 3\pi)^2\times (-2,2)$]{\includegraphics[width=7.5cm]{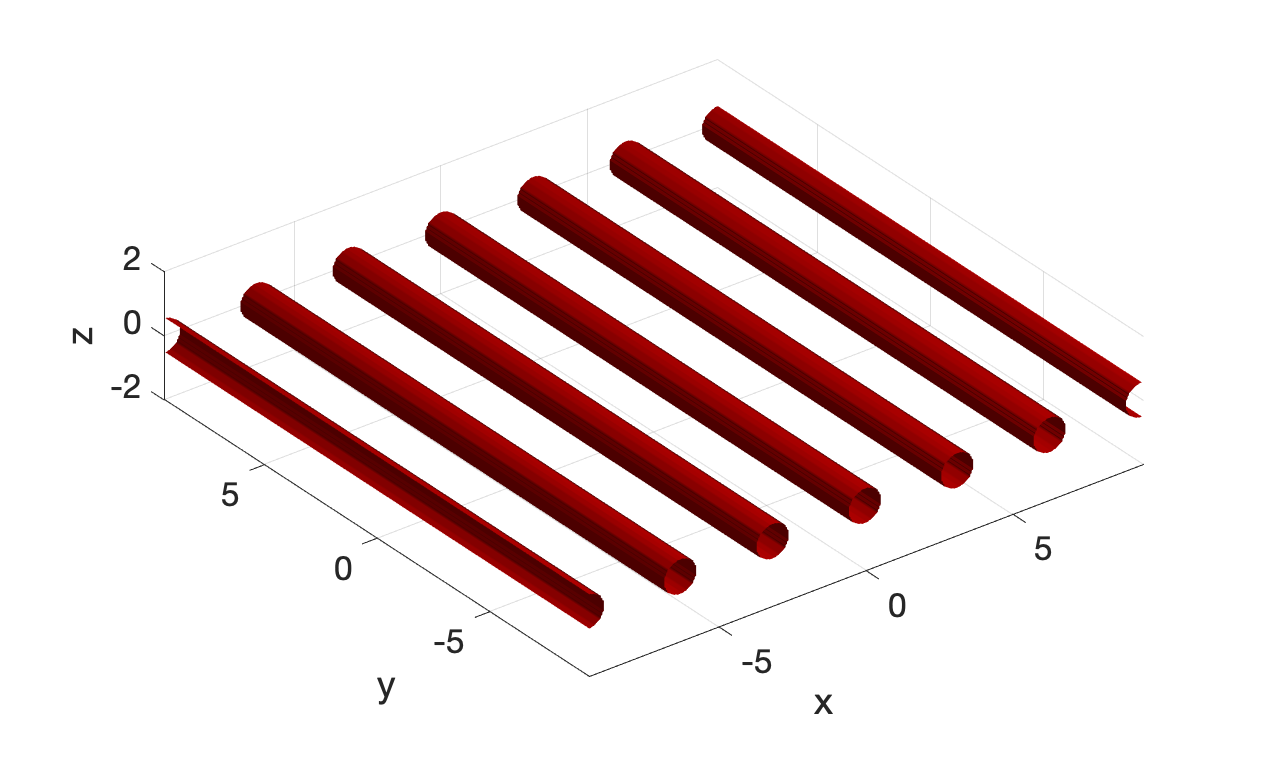}} \hspace{-0.5cm}
\subfloat[Reconstruction]{\includegraphics[width=7.5cm]{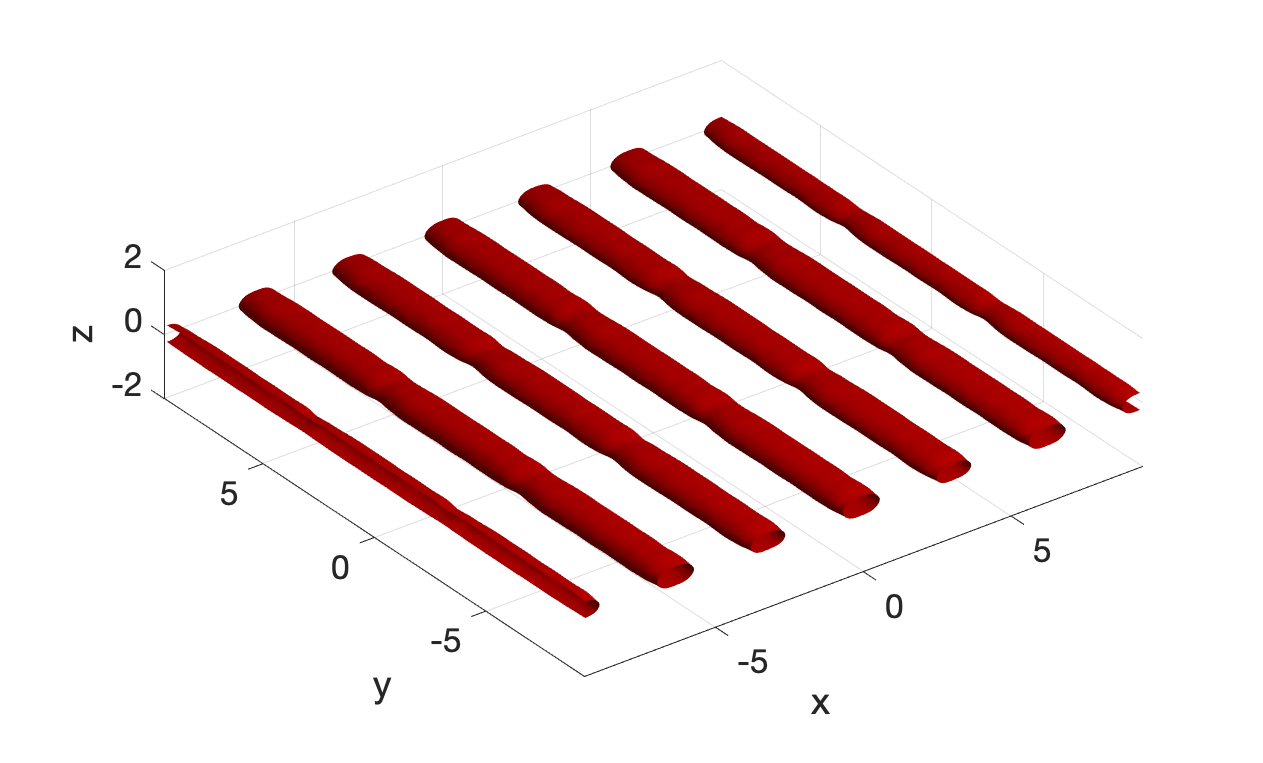}}\\
\caption{Shape reconstruction of periodically aligned bars. There is  2$\%$ artificial noise in the data.
The isovalue for the isosurface plotting is chosen to be one third of the maximal value of the computed image.}
\label{fig2}
\end{figure}

 \begin{figure}[h!]
\centering
\subfloat[Exact geometry viewed at $\Omega\cap \{z=0\}$]{\includegraphics[width=4.5cm]{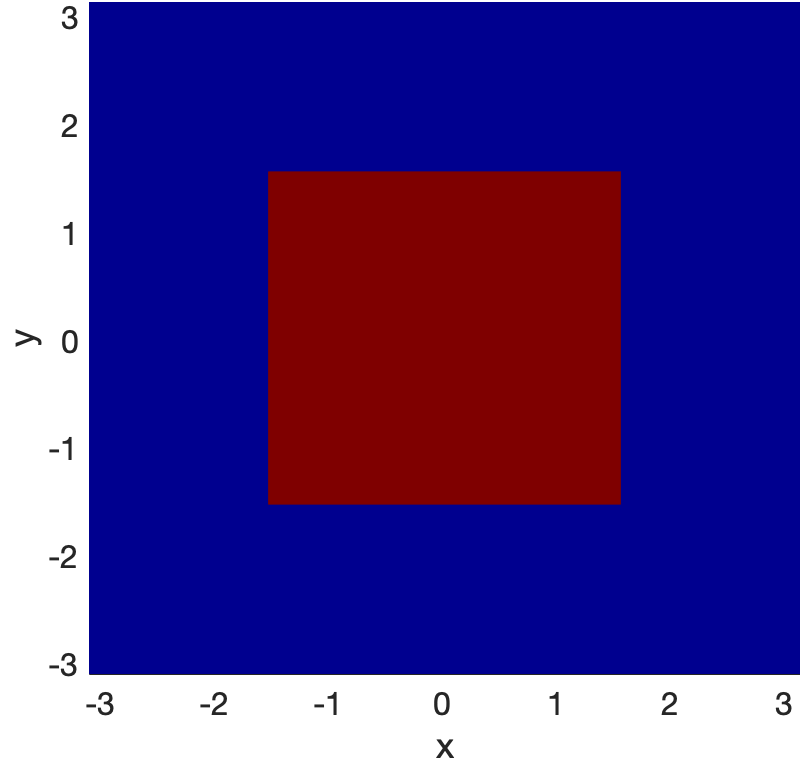}} \hspace{2cm}
\subfloat[Reconstruction  viewed at $\Omega\cap \{z=0\}$]{\includegraphics[width=4.5cm]{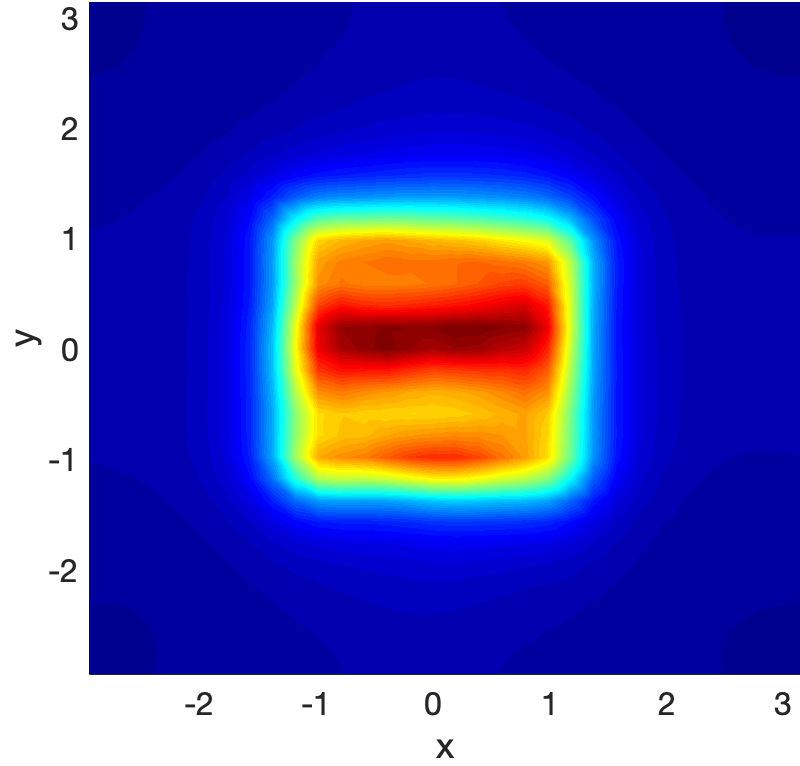}}\\
\subfloat[Exact geometry in $(-3\pi, 3\pi)^2\times (-2,2)$]{\includegraphics[width=7.5cm]{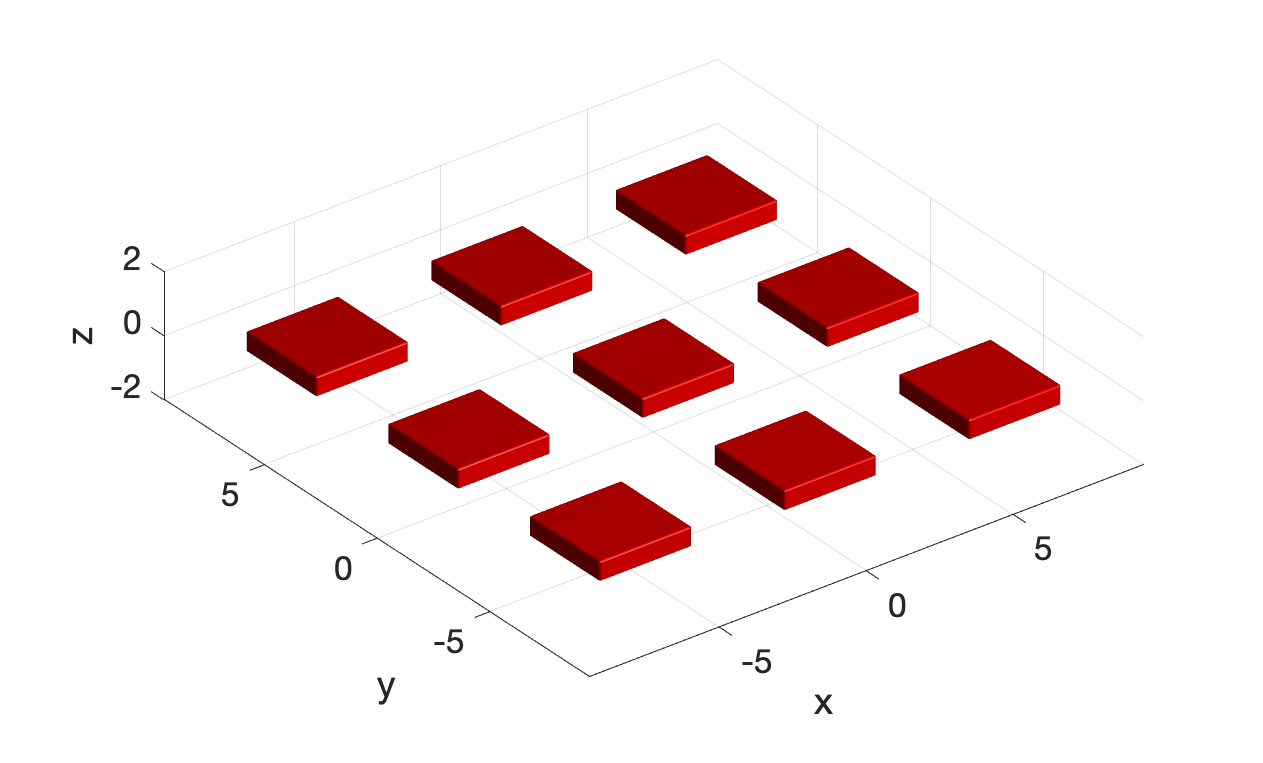}} \hspace{-0.5cm}
\subfloat[Reconstruction]{\includegraphics[width=7.5cm]{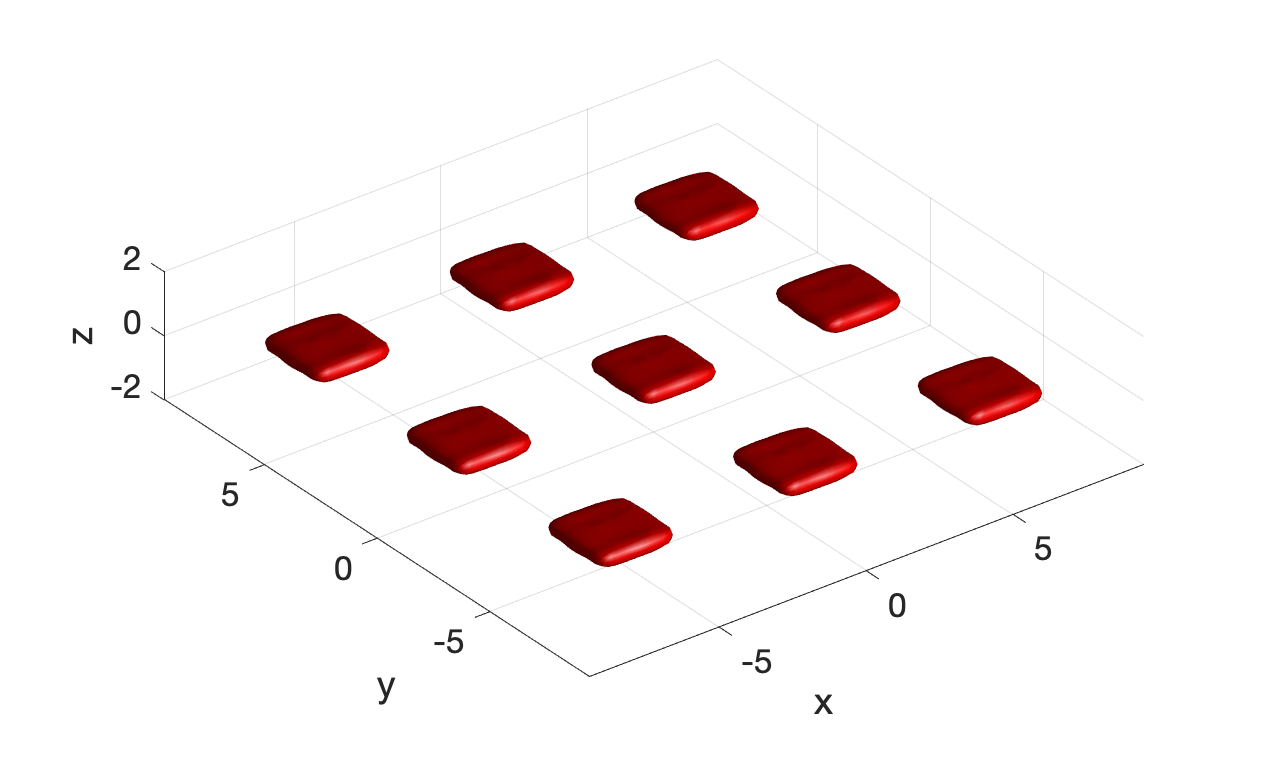}}\\
\caption{Shape reconstruction of  periodically aligned cubes.  There is  2$\%$ artificial noise in the data.
The isovalue for the isosurface plotting is chosen to be one third of the maximal value of the computed image.}
\label{fig3}
\end{figure}

 \begin{figure}[ht!]
\centering
\subfloat[Exact geometry viewed at $\Omega\cap \{z=0\}$]{\includegraphics[width=4.5cm]{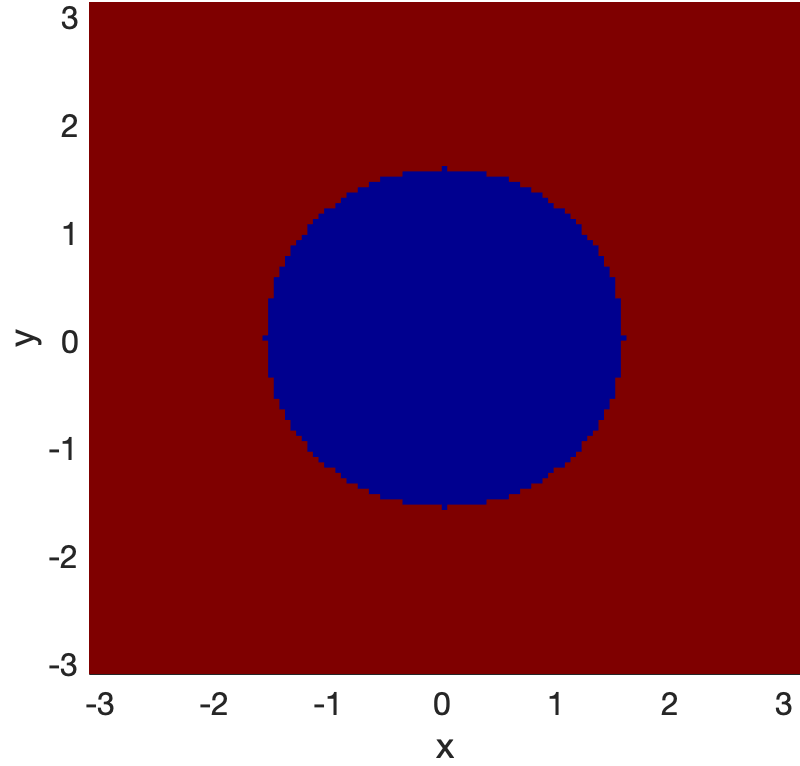}} \hspace{2cm}
\subfloat[Reconstruction viewed at $\Omega\cap \{z=0\}$]{\includegraphics[width=4.5cm]{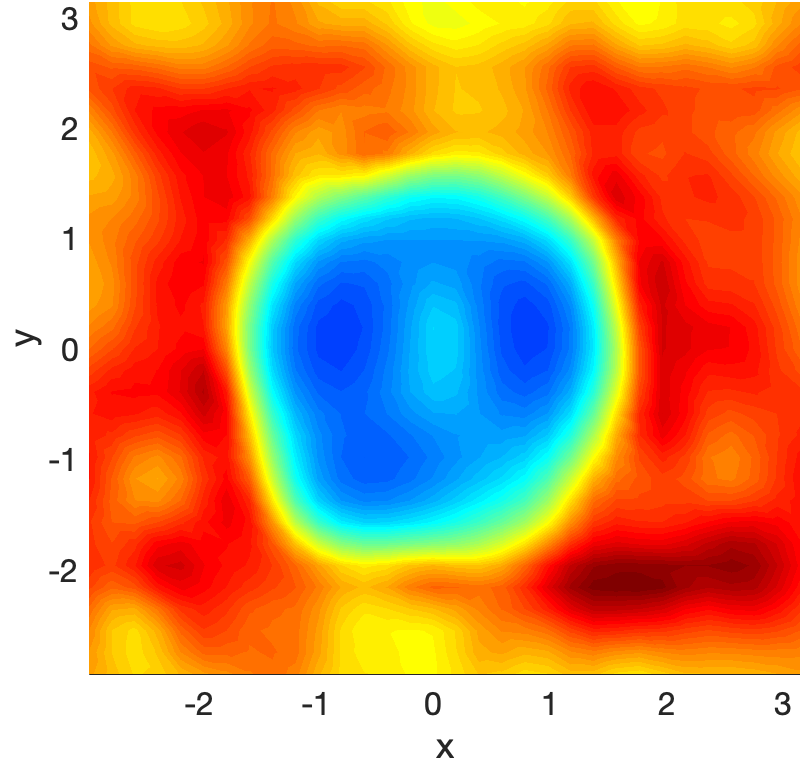}}\\
\subfloat[Exact geometry in $(-3\pi, 3\pi)^2\times (-2,2)$]{\includegraphics[width=7.5cm]{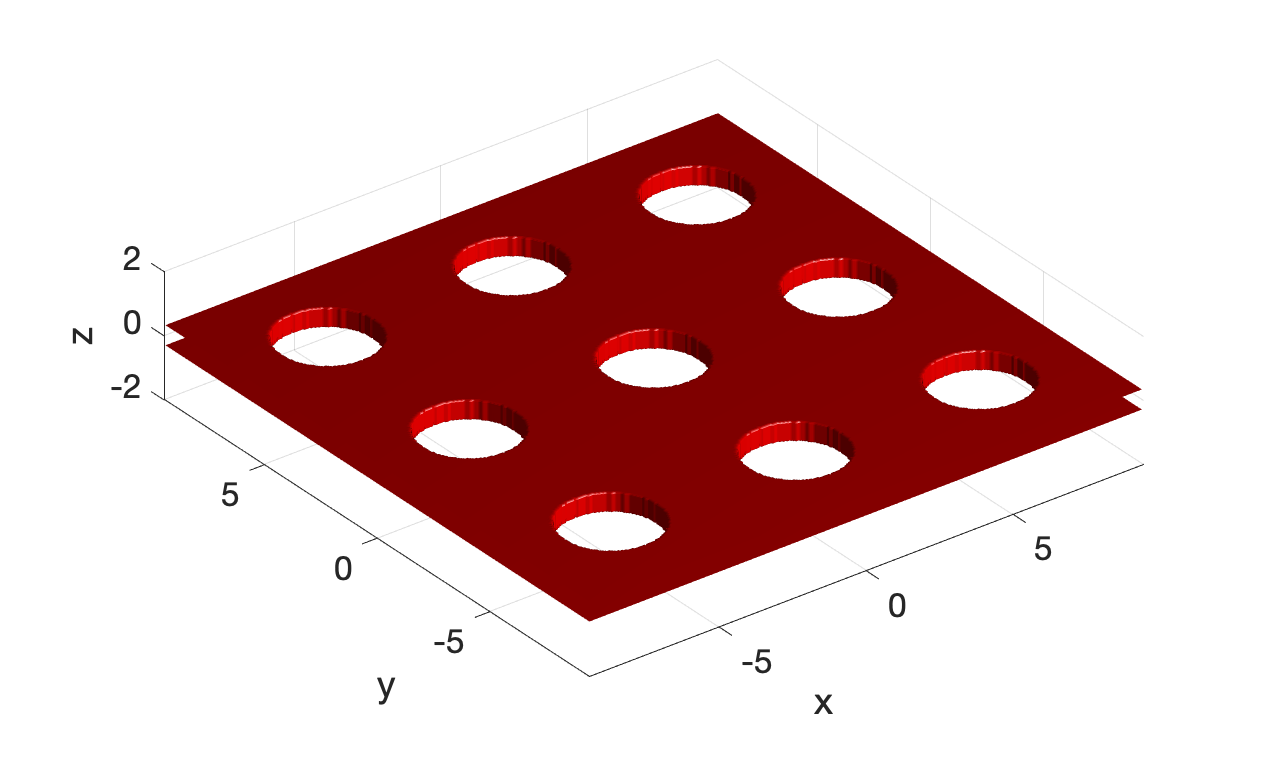}} \hspace{-0.5cm}
\subfloat[Reconstruction]{\includegraphics[width=7.5cm]{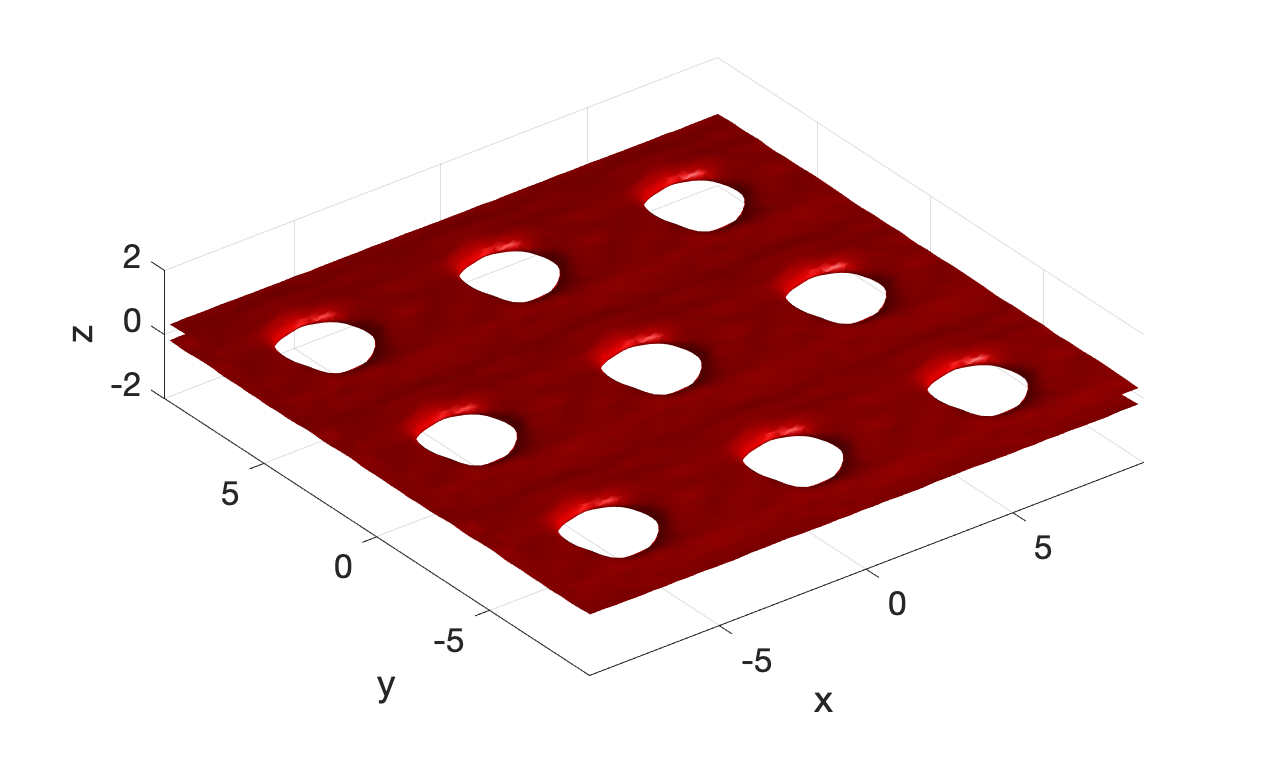}}\\
\caption{Shape reconstruction of a strip with periodically aligned holes.  There is  2$\%$ artificial noise in the data.
The isovalue for the isosurface plotting is chosen to be one third of the maximal value of the computed image.}
\label{fig4}
\end{figure}

\vspace{0.5cm}

{\bf Acknowledgement.} The authors were partially supported by NSF grant DMS-1812693.
\bibliographystyle{plain}
\bibliography{ip-biblio2}

\end{document}